\newtheorem{thm}{Theorem}[section]
\newtheorem{lem}[thm]{Lemma}
\newtheorem{prop}[thm]{Proposition}
\theoremstyle{definition}
\newtheorem{defn}[thm]{Definition}
\theoremstyle{remark}
\newtheorem{rem}[thm]{\bf Remark}
\numberwithin{equation}{section}
\begin{document}
\title[Weighted projective lines of tubular type and equivariantization]{Weighted projective lines of tubular type and equivariantization}

\author[Jianmin Chen, Xiao-Wu Chen] {Jianmin Chen, Xiao-Wu Chen$^*$}

\thanks{$^*$ the corresponding author}
\subjclass[2010]{16W50, 14A22, 14H52, 14F05}
\date{\today}
\keywords{weighted projective line, tubular algebra, restriction subalgebra, equivariantization}%
\maketitle

\dedicatory{}%
\commby{}%

\begin{abstract}
 We prove that the categories of coherent sheaves over  weighted projective lines of tubular type are explicitly related to each other via the equivariantization with respect to certain cyclic group actions.
\end{abstract}

\section{Introduction}

The notion of weighted projective lines is introduced in \cite{GL87,GL90}, which gives a geometric treatment to the representation theory of the canonical algebras in the sense of \cite{Rin}. We are interested in weighted projective lines of tubular type. Recall that the category of coherent sheaves over such a weighted projective line is derived equivalent to the module category over a tubular algebra of the same type.

It is known due to \cite{GL87, Len, CCZ} that the category of coherent sheaves over a weighted projective line of tubular type is equivalent to the category of equivariant coherent sheaves over an elliptic curve with respect to a certain cyclic group action; compare \cite{Po}. This result explains well that the classification of indecomposable modules over a tubular algebra in \cite{Rin,LM} has similar features as the classification of indecomposable coherent sheaves over an elliptic curve in \cite{At}.

In this paper, we show that the categories of coherent sheaves over weighted projective lines of different tubular types are related to each other, via the equivariantization with respect to certain cyclic group actions. Indeed, these cyclic groups are of order two or three, and the actions are the degree-shift actions,  which are induced from the grading on  the homogeneous coordinate algebras. Here, the equivariantization means forming the category of equivariant objects for a given finite group action on a category; compare \cite{RR, De, DGNO}.

Let us describe the main results of this paper. Let $k$ be an algebraically closed field, whose characteristic is different from two or three. According to the types, weighted projective lines $\mathbb{X}$ of tubular type are denoted by $\mathbb{X}(2,2,2,2; \lambda)$, $\mathbb{X}(3,3,3)$, $\mathbb{X}(4,4,2)$ and $\mathbb{X}(6,3,2)$, respectively. Here, $\lambda\in k$ is not $0$ or $1$. The Auslander-Reiten translation on the category ${\rm coh}\mbox{-}\mathbb{X}$  of coherent sheaves  over $\mathbb{X}$ is induced from the degree-shift automorphism by the dualizing element $\vec{\omega}$, which is an element in the grading group of the homogeneous coordinate algebra of $\mathbb{X}$.

 In the tubular types, the dualizing element $\vec{\omega}$ has order $2$, $3$, $4$ and $6$, according to their types. By the degree-shift automorphisms, we have a strict action on ${\rm coh}\mbox{-}\mathbb{X}$ by the cyclic group $\mathbb{Z}\vec{\omega}$ and also by its subgroup. For a finite group $G$ and a (strict) $G$-action on a category $\mathcal{A}$, we denote by $\mathcal{A}^G$ the category of equivariant objects. In particular, we have the category $({\rm coh}\mbox{-}\mathbb{X})^G$ for any subgroup $G\subseteq \mathbb{Z}\vec{\omega}$.

 The following theorem combines Propositions \ref{prop:A}, \ref{prop:B} and \ref{prop:C}. Here, we fix $\epsilon \in k$ satisfying $\epsilon^2-\epsilon+1=0$.

\vskip 10pt

\noindent {\bf Theorem.} \emph{
Keep the notation and assumptions as above. Denote by $\vec{\omega}$ the dualizing element in the grading group of the homogeneous coordinate algebra of $\mathbb{X}$. Then we have the following equivalences of categories.
\begin{enumerate}
\item $({\rm coh}\mbox{-}\mathbb{X}(4,4,2))^{\mathbb{Z}(2\vec{\omega})}\stackrel{\sim}\longrightarrow {\rm coh}\mbox{-}\mathbb{X}(2,2,2,2;-1).$
\item $({\rm coh}\mbox{-}\mathbb{X}(6,3,2))^{\mathbb{Z}(2\vec{\omega})}\stackrel{\sim}\longrightarrow {\rm coh}\mbox{-}\mathbb{X}(2,2,2,2;\epsilon).$
\item $({\rm coh}\mbox{-}\mathbb{X}(6,3,2))^{\mathbb{Z}(3\vec{\omega})}\stackrel{\sim}\longrightarrow {\rm coh}\mbox{-}\mathbb{X}(3,3,3).$ \hfill $\square$
\end{enumerate}
}

\vskip 10pt

The acting groups in (1) and (3) are of order two, and the one in (2) is of order three. For the proof, we construct explicit algebra homomorphisms between the corresponding homogeneous coordinate algebras. In all these three cases, the algebra homomorphisms are injective, whose images equal certain restriction subalgebras \cite{CCZ} of the homogeneous coordinate algebras. We mention that the idea using the restriction subalgebras goes back to \cite{GL87, GL90, Len94}.

The paper is structured as follows. In Section 2, we recall basic facts on the homogeneous coordinate algebras of weighted projective lines, and their restriction subalgebras. We introduce admissible homomorphisms between the string groups, which play a role in Proposition \ref{prop:equiv}. Indeed, Proposition \ref{prop:equiv} claims that under certain conditions, the categories of coherent sheaves over different weighted projective lines are related via the equivariantization with respect to a degree-shift action.

In Section 3, we prove Theorem. For the proof, we construct explicit admissible homomorphisms between the string groups and algebra homomorphisms between the homogeneous coordinate algebras, which are verified to satisfy the conditions in Proposition \ref{prop:equiv}. In Proposition \ref{prop:D}, we show that the categories of coherent sheaves over weighted projective lines of the same weight type $(2,2,2,2)$ but with different parameters,  might be related to each other via the equivariantization with respect to a certain degree-shift action.

\section{The restriction subalgebras and equivariantization}

In this section, we recall from \cite{GL87, GL90} basic facts on the homogeneous coordinate algebra of a weighted projective line. We study admissible homomorphisms between the string groups and algebra homomorphisms between the homogeneous coordinate algebras, whose images equal the restriction subalgebras with respect to an effective subgroup of the string group. We formulate Proposition \ref{prop:equiv}, which gives sufficient conditions on when two categories of coherent sheaves over different weighted projective lines are related by the equivariantization with respect to a certain degree-shift action.

\subsection{} Let $t\geq 1$ be an integer. A \emph{weight sequence} ${\bf p}=(p_1, p_2, \cdots, p_t)$ of length  $t$ consists of positive integers satisfying $p_i\geq 2$.  We might assume that the weight sequence ${\bf p}$ satisfies $p_1\geq p_2\geq \cdots \geq p_t$.

The \emph{string group} $L({\bf p})$ is an abelian group with generators $\vec{x}_1, \vec{x}_2, \cdots, \vec{x}_t$ subject to the relations $p_1\vec{x}_1=p_2\vec{x}_2=\cdots = p_t\vec{x}_t$, where this common element is denoted by $\vec{c}$ and called the \emph{canonical element}. Here, the abelian group is written additively.

The string group $L(\mathbf{p})$ is of rank one, where $\vec{c}$ is of infinite order. There is an isomorphism of abelian groups
$$ L(\mathbf{p})/\mathbb{Z}\vec{c}\stackrel{\sim}\longrightarrow \prod_{i=1}^t \mathbb{Z}/p_i\mathbb{Z},$$
sending $\vec{x}_i+\mathbb{Z}\vec{c}$ to the vector $(0, \cdots,0,  \bar{1}, 0, \cdots, 0)$ with $\bar{1}$ on the $i$-th component. From this isomorphism, we deduce that each element $\vec{x}$ in $L(\mathbf{p})$ is uniquely written in its \emph{normal form}
\begin{align}\label{equ:nor}
\vec{x}=l\vec{c}+\sum_{i=1}^t l_i\vec{x}_i
\end{align}
with $l\in \mathbb{Z}$ and $0\leq l_i\leq p_i-1$.

For each $1\leq i\leq t$, we have a surjective group homomorphism $\pi_i\colon L(\mathbf{p})\rightarrow \mathbb{Z}/p_i\mathbb{Z}$ with $\pi_i(\vec{x}_j)=\delta_{i,j}\bar{1}$. Here, $\delta$ is the Kronecker symbol. Following \cite[Definition 6.5]{CCZ}, an infinite subgroup $H\subseteq L(\mathbf{p})$ is \emph{effective} provided that $\pi_i(H)=\mathbb{Z}/p_i\mathbb{Z}$, or equivalently, $\bar{1}$ lies in $\pi_i(H)$ for each $1\leq i\leq t$.

Let $p={\rm lcm}(\mathbf{p})={\rm lcm}(p_1, p_2, \cdots, p_t)$ be the least common multiple of $\mathbf{p}$. There is a unique surjective group homomorphism $\delta\colon L(\mathbf{p})\rightarrow \mathbb{Z}$, called the \emph{degree map}, given by $\delta(\vec{x}_i)=\frac{p}{p_i}$. We observe that the kernel of $\delta$ equals the torsion subgroup of $L(\mathbf{p})$. Recall that the \emph{dualizing element} $\vec{\omega}$ in $L(\mathbf{p})$ is defined as $\vec{\omega}=(t-2)\vec{c}-\sum_{i=1}^t \vec{x}_i$. We observe that $\delta(\vec{\omega})=p((t-2)-\sum_{i=1}^t\frac{1}{p_i})$.

The weight sequence $\mathbf{p}$ is of \emph{tubular type} provided that $\vec{\omega}$ is a torsion element, or equivalently, $\delta(\vec{\omega})=0$. From this,  we infer that $\mathbf{p}=(2,2,2,2)$, $(3,3,3)$, $(4,4,2)$  and $(6,3,2)$.

We mention that if the weight sequence $\mathbf{p}$ is of non-tubular type, or equivalently, the dualizing element  $\vec{\omega}$ is of infinite order, then the subgroup $\mathbb{Z}\vec{\omega}\subseteq L(\mathbf{p})$ is effective.

For each element $\vec{x}=l\vec{c}+\sum_{i=1}^t l_i\vec{x}_i$ in its normal form, we set ${\rm mult}(\vec{x})={\rm max}\{l+1, 0\}$. This gives rise to a map
\begin{align}
{\rm mult}\colon L(\mathbf{p})\longrightarrow \mathbb{N}=\{0, 1, 2, \cdots\}.
\end{align}

Let $\mathbf{q}=(q_1,q_2, \cdots, q_s)$ be another weight sequence and $L(\mathbf{q})$ be the corresponding string group. Then we have the map ${\rm mult}\colon L(\mathbf{q})\rightarrow \mathbb{N}$ as above.

\begin{defn}\label{defn:AH}
A group homomorphism $\pi\colon L(\mathbf{q})\rightarrow L(\mathbf{p})$ is \emph{admissible} provided that the following conditions are satisfied
\begin{enumerate}
\item[(AH1)]  the subgroup ${\rm Im}\pi\subseteq L(\mathbf{p})$ is effective;
\item[(AH2)]  for each $\vec{x}\in {\rm Im}\pi$, we have $\sum_{\vec{y}\in \pi^{-1}(\vec{x})} {\rm mult}(\vec{y})={\rm mult}(\vec{x})$. \hfill $\square$
\end{enumerate}
\end{defn}

We observe that  by (AH1) the fiber $\pi^{-1}(\vec{x})$ for each $\vec{x}\in {\rm Im}\pi$ is a finite set.

\subsection{}

Let $k$ be an arbitrary  field. A \emph{parameter sequence} $\mathbf{\lambda}=(\lambda_1, \lambda_2, \cdots, \lambda_t)$ of length $t$ consists of a collection of pairwise distinct rational points on the projective line $\mathbb{P}_k^1$. The parameter sequence is \emph{normalized} if $\lambda_1=\infty$, $\lambda_2=0$ and $\lambda_3=1$.

 A \emph{weighted projective line} $\mathbb{X}=\mathbb{X}(\mathbf{p}, \mathbf{\lambda})$ of weight type $\mathbf{p}$ and parameter sequence $\mathbf{\lambda}$ is by definition the projective line $\mathbb{P}_k^1$ such that each point $\lambda_i$ has weight $p_i$. We will assume that the parameter sequence $\mathbf{\lambda}$ is normalized.  The \emph{homogeneous coordinate algebra} $S=S(\mathbf{p}, \mathbf{\lambda})$ of the weighted projective line $\mathbb{X}$ is defined to be $k[X_1, X_2, \cdots, X_t]/I$, where the ideal $I$ is generated by $X_i^{p_i}-(X_2^{p_2}-\lambda_iX_1^{p_1})$ for $3\leq i\leq t$. We write $x_i=X_i+I$ in $S$.

The algebra $S$ is $L(\mathbf{p})$-graded by means of ${\rm deg}x_i=\vec{x}_i$. Then we have $S=\bigoplus_{\vec{x}\in L(\mathbf{p})} S_{\vec{x}}$, where $S_{\vec{x}}$ denotes the homogeneous component of degree $\vec{x}$. Write $\vec{x}=l\vec{c}+\sum_{i=1}^tl_i\vec{x}_i$ in its normal form (\ref{equ:nor}). Then $S_{\vec{x}}\neq 0$ if and only if $l\geq 0$. Indeed, $\{x_1^{ap_1}x_2^{bp_2}x_1^{l_1}x_2^{l_2}\cdots x_t^{l_t}\; |\; a+b=l, a, b\geq 0\}$ is a basis of $S_{\vec{x}}$; compare \cite[Proposition 1.3]{GL87}. We deduce that
\begin{align}\label{equ:dim}
{\rm dim}_k\; S_{\vec{x}}={\rm mult}(\vec{x}), \mbox{ for all } \vec{x}\in L(\mathbf{p}).
\end{align}

For an infinite subgroup $H\subseteq L(\mathbf{p})$, we have the \emph{restriction subalgebra} $S_H=\bigoplus_{\vec{x}\in H}S_{\vec{x}}$ of $S$, which is viewed as an $H$-graded algebra. We recall from \cite[Lemma 6.2]{CCZ}  that this algebra $S_H$ is a finitely generated $k$-algebra.

 In general, the structure of these restriction subalgebras $S_H$ is not known. We mention that if $\mathbf{p}$ is of non-tubular type and $H=\mathbb{Z}\vec{\omega}$, the corresponding restriction subalgebras are related to Kleinian singularities and Fuchsian singularities; see \cite[Proposition 8.4]{GL90} and \cite[Sections 5 and 6]{Len94}. In what follows, we explain the importance of effective subgroups $H$ in $L(\mathbf{p})=L$.

We recall that the weighted projective line $\mathbb{X}$ is endowed with a structure sheaf of $L$-graded commutative noetherian algebras. Then the abelian category ${\rm coh}\mbox{-}\mathbb{X}$ of ($L$-graded) coherent sheaves on $\mathbb{X}$ is defined. In what follows, we will recall a more convenient description of  ${\rm coh}\mbox{-}\mathbb{X}$ via graded $S$-modules.

We denote by ${\rm mod}^L\mbox{-}S$ the abelian category of finitely generated $L$-graded $S$-modules, and by ${\rm mod}_0^L\mbox{-}S$ its Serre subcategory formed by finite dimensional modules. We denote by ${\rm qmod}^L\mbox{-}S={\rm mod}^L\mbox{-}S/{{\rm mod}_0^L\mbox{-}S}$ the quotient abelian category. By \cite[Theorem 1.8]{GL87} the sheafification functor yields an equivalence
\begin{align}\label{equ:coh}
{\rm qmod}^L\mbox{-}S\stackrel{\sim}\longrightarrow {\rm coh}\mbox{-}\mathbb{X}.
\end{align}
We will identify these two categories.

Each $\vec{x}\in L$ gives rise to an automorphism $(\vec{x})\colon {\rm mod}^L\mbox{-}S\rightarrow {\rm mod}^L\mbox{-}S$, called the \emph{degree-shift functor}, as follows. For each $L$-graded $S$-module $M=\bigoplus_{\vec{l}\in L}M_{\vec{l}}$, the new module $M(\vec{x})=M$ is graded by $M(\vec{x})_{\vec{l}}=M_{\vec{l}+\vec{x}}$.  This degree-shift functor induces the corresponding automorphisms on ${\rm qmod}^L\mbox{-}S$ and on ${\rm coh}\mbox{-}\mathbb{X}$, both of which are denoted by $(\vec{x})$.

For an infinite subgroup $H\subseteq L(\mathbf{p})$, the restriction subalgebra $S_H$ is $H$-graded. Then we have abelian categories ${\rm mod}^H\mbox{-}S_H$ and ${\rm qmod}^H\mbox{-}S_H$. The \emph{restriction functor} ${\rm res}\colon {\rm mod}^L\mbox{-}S\rightarrow {\rm mod}^H\mbox{-}S_H$ sends an $L$-graded $S$-module $M=\bigoplus_{\vec{l}\in L}M_{\vec{l}}$ to $M_H=\bigoplus_{\vec{l}\in H} M_{\vec{l}}$, which is naturally an $H$-graded $S_H$-module. The exact functor ``res" preserves finite dimensional modules, and induces an exact functor between the quotient categories
$${\rm res}\colon {\rm qmod}^L\mbox{-}S\longrightarrow {\rm qmod}^H\mbox{-}S_H.$$

The following result combines \cite[Proposition 6.6]{CCZ} with the equivalence (\ref{equ:coh}).

\begin{lem}\label{lem:eff}
Let $H\subseteq L=L(\mathbf{p})$ be an infinite subgroup. Then the induced functor ${\rm res}\colon {\rm qmod}^L\mbox{-}S\rightarrow {\rm qmod}^H\mbox{-}S_H$ is an equivalence if and only if the subgroup $H$ is effective, in which case, we might identify ${\rm coh}\mbox{-}\mathbb{X}$ with ${\rm qmod}^H\mbox{-}S_H$.  \hfill $\square$
\end{lem}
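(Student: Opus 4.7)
The plan is to invoke the sheafification equivalence (\ref{equ:coh}) to translate the assertion into the module-theoretic criterion of \cite[Proposition 6.6]{CCZ}: ``${\rm res}\colon{\rm qmod}^L\mbox{-}S\to{\rm qmod}^H\mbox{-}S_H$ is an equivalence iff $H$ is effective''. Along the way, I would first record the bridge observation: since $\pi_i(\vec{c})=0$ for every $i$, effectiveness of $H$ is equivalent to $L=H+\mathbb{Z}\vec{c}$, so that $L/H$ is a finite cyclic group generated by the class of $\vec{c}$, and some positive multiple $n\vec{c}$ lies in $H$.

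For the ``if'' direction, I would construct a quasi-inverse to ${\rm res}$ by induction. Given $N$ in ${\rm mod}^H\mbox{-}S_H$, form the $L$-graded $S$-module $S\otimes_{S_H}N$, equipped with the $L$-grading inherited from the finite $L/H$-indexed decomposition $S=\bigoplus_{[\vec{l}]\in L/H}\bigoplus_{\vec{l}'\in[\vec{l}]}S_{\vec{l}'}$ of $S$ viewed as an $S_H$-module. The technical heart is then to verify that the adjunction unit $N\to(S\otimes_{S_H}N)_H$ and counit $S\otimes_{S_H}M_H\to M$ have kernel and cokernel lying in ${\rm mod}_0$, hence descend to isomorphisms in the quotient categories. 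The dimension formula (\ref{equ:dim}), combined with multiplication by a generator of $S_{n\vec{c}}\subseteq S_H$ linking graded pieces of $M$ across $L/H$-cosets, forces any such discrepancy to be supported on finitely many degrees.

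For the ``only if'' direction, I would argue contrapositively: if $\pi_i(H)\subsetneq\mathbb{Z}/p_i\mathbb{Z}$ for some $i$, then the essential image of ${\rm res}$ must miss certain $H$-graded simple modules of $S_H$. A concrete obstruction can be built from a maximal $H$-homogeneous ideal of $S_H$ whose simple quotient is not expressible as ${\rm res}(M)$ for any finitely generated $L$-graded $S$-module $M$, exploiting the extra $L/H$-symmetry that any object in the essential image of ${\rm res}$ must carry.

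The main technical obstacle will be the bookkeeping in the ``if'' direction, namely verifying that the unit and counit of the induction-restriction adjunction become isomorphisms in ${\rm qmod}$. This hinges on combining the dimension formula (\ref{equ:dim}) with the finite cyclic structure of $L/H$ guaranteed by effectiveness, so that the $L$-grading is recovered from the $H$-grading up to a controlled finite-dimensional error. Once this is accomplished, the contrapositive obstruction in the converse direction and the final transfer through (\ref{equ:coh}) fit together directly.
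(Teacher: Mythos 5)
The top level of your plan coincides with what the paper actually does: Lemma~\ref{lem:eff} is stated in the paper with no argument at all beyond ``combine \cite[Proposition 6.6]{CCZ} with the equivalence (\ref{equ:coh})'', which is exactly your opening move. The trouble lies in the extra material you supply when you try to sketch a proof of the cited criterion itself. Your ``bridge observation'' --- that effectiveness of $H$ is equivalent to $L=H+\mathbb{Z}\vec{c}$, so that $L/H$ is cyclic generated by the class of $\vec{c}$ --- is false. Since $\pi_i(\vec{c})=0$, each $\pi_i$ factors through $L/\mathbb{Z}\vec{c}\cong\prod_{i=1}^t\mathbb{Z}/p_i\mathbb{Z}$, and effectiveness only requires that the image of $H$ surject onto each factor \emph{separately}; it does not force the image to be the whole product. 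The paper's own main example refutes your claim: in Lemma~\ref{lem:A} the subgroup ${\rm Im}\pi=\langle \vec{x}_1,\vec{x}_2,\vec{x}_3+\vec{x}_4\rangle\subseteq L(2,2,2,2)$ is effective, yet its image in $(\mathbb{Z}/2\mathbb{Z})^4$ is the index-two subgroup $\{(a,b,c,d)\mid c=d\}$, so ${\rm Im}\pi+\mathbb{Z}\vec{c}\neq L$ and $L/{\rm Im}\pi$ is not generated by the class of $\vec{c}$.

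This error propagates into the technical heart of your ``if'' direction. A positive multiple $n\vec{c}$ does lie in $H$ (because $L$ has rank one and $H$ is infinite, $L/H$ is finite), but precisely because $n\vec{c}\in H$, multiplication by a generator of $S_{n\vec{c}}\subseteq S_H$ preserves the $L/H$-coset of the degree of any homogeneous element; it cannot ``link graded pieces of $M$ across $L/H$-cosets'' as you assert. The genuine role of effectiveness is subtler: it guarantees that every coset of $H$ in $L$ carries enough nonzero homogeneous components of $S$, with multiplication maps surjective in large degree, for the $L$-grading to be reconstructed from the $H$-grading up to a finite-dimensional discrepancy. That is exactly the content of \cite[Proposition 6.6]{CCZ} (proved there via a monadicity argument), and your sketch does not engage with it; the ``only if'' direction is likewise left at the level of a plausible but unverified obstruction. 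If you intend merely to quote \cite[Proposition 6.6]{CCZ}, as the paper does, delete the sketch; if you intend to reprove it, the argument as written does not go through.
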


\subsection{}

Let $\mathbf{q}=(q_1, q_2, \cdots, q_s)$ and $\mathbf{\mu}=(\mu_1, \mu_2, \cdots, \mu_s)$ be another weight sequence and parameter sequence, respectively. We have the homogeneous coordinate algebra $S(\mathbf{q}, \mathbf{\mu})$ of the weighted projective line $\mathbb{X}(\mathbf{q}, \mathbf{\mu})$.

We suppose that $\pi\colon L(\mathbf{q})\rightarrow L(\mathbf{p})$ is a group homomorphism with ${\rm Im}\pi\subseteq L(\mathbf{p})$ an infinite subgroup, and that there is an algebra homomorphism $\phi\colon S(\mathbf{q}, \mathbf{\mu})\rightarrow S(\mathbf{p}, \mathbf{\lambda})$  satisfying $\phi(S(\mathbf{q}, \mathbf{\mu})_{\vec{y}})\subseteq S(\mathbf{p}, \mathbf{\lambda})_{\pi(\vec{y})}$ for each $\vec{y}\in L(\mathbf{q})$. We observe that $\phi$ induces an ${\rm Im}\pi$-graded algebra homomorphism
\begin{align}\label{equ:phi}
\bar{\phi}\colon \pi_*S(\mathbf{q}, \mathbf{\mu})\longrightarrow S(\mathbf{p}, \mathbf{\lambda})_{{\rm Im}\pi}.
\end{align}
Here, $S(\mathbf{p}, \mathbf{\lambda})_{{\rm Im}\pi}$ is the restriction subalgebra with respect to the subgroup ${\rm Im}\pi\subseteq L(\mathbf{p})$. For the ${\rm Im}\pi$-graded algebra $\pi_*S(\mathbf{q}, \mathbf{\mu})$, we apply the notation in the following paragraph to the surjective homomorphism $L(\mathbf{q})\rightarrow {\rm Im}\pi$.

Given a surjective group homomorphism $\theta\colon G\rightarrow K$ and a $G$-graded algebra $A=\bigoplus_{g\in G}A_G$, we define a $K$-graded algebra $\theta_*A$ as follows: as an ungraded algebra $\theta_*A=A$, while its homogeneous component $(\theta_*A)_h=\bigoplus_{g\in \theta^{-1}(h)} A_g$ for each $h\in K$. In other words, $\theta_*A$ equals $A$, but with a coarser grading.

\begin{lem}\label{lem:phi}
Keep the notation as above. Assume that the ${\rm Im}\pi$-graded algebra homomorphism $\bar{\phi}$ is surjective and that $\pi\colon L(\mathbf{q})\rightarrow L(\mathbf{p})$ satisfies ${\rm (AH2)}$ in Definition {\rm \ref{defn:AH}}. Then $\bar{\phi}$ is an isomorphism.
\end{lem}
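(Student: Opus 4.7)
The plan is to verify the statement degree by degree on the $\mathrm{Im}\,\pi$-grading. Since $\bar{\phi}$ is, by construction, a graded homomorphism and is assumed to be surjective, it suffices to show that for every $\vec{x}\in\mathrm{Im}\,\pi$ the homogeneous components $(\pi_*S(\mathbf{q},\mu))_{\vec{x}}$ and $(S(\mathbf{p},\lambda)_{\mathrm{Im}\,\pi})_{\vec{x}}$ have the same finite $k$-dimension. A graded surjection between graded vector spaces whose homogeneous pieces have equal finite dimension is automatically an isomorphism.

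First I would spell out the two dimensions using the formula (\ref{equ:dim}). On the target side, $(S(\mathbf{p},\lambda)_{\mathrm{Im}\,\pi})_{\vec{x}}=S(\mathbf{p},\lambda)_{\vec{x}}$ by the very definition of the restriction subalgebra, so its dimension is $\mathrm{mult}(\vec{x})$. On the source side, by the definition of $\pi_*$ applied to the surjection $L(\mathbf{q})\twoheadrightarrow\mathrm{Im}\,\pi$, we have
\begin{equation*}
(\pi_*S(\mathbf{q},\mu))_{\vec{x}}=\bigoplus_{\vec{y}\in\pi^{-1}(\vec{x})}S(\mathbf{q},\mu)_{\vec{y}},
\end{equation*}
whose dimension is $\sum_{\vec{y}\in\pi^{-1}(\vec{x})}\mathrm{mult}(\vec{y})$, again by (\ref{equ:dim}). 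Note that the fiber $\pi^{-1}(\vec{x})$ is finite: by (AH1) the subgroup $\mathrm{Im}\,\pi$ has finite index in $L(\mathbf{p})$ in the appropriate sense guaranteeing finite fibers, but more directly the multiplicity function takes finite values, and only those $\vec{y}$ with $\mathrm{mult}(\vec{y})>0$ contribute, which is precisely the content used in the remark following Definition \ref{defn:AH}.

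The key input is then condition (AH2) in Definition \ref{defn:AH}, which says exactly
\begin{equation*}
\sum_{\vec{y}\in\pi^{-1}(\vec{x})}\mathrm{mult}(\vec{y})=\mathrm{mult}(\vec{x}).
\end{equation*}
Substituting this into the dimension formulas yields $\dim_k(\pi_*S(\mathbf{q},\mu))_{\vec{x}}=\dim_k(S(\mathbf{p},\lambda)_{\mathrm{Im}\,\pi})_{\vec{x}}$ for every $\vec{x}\in\mathrm{Im}\,\pi$. Combined with the surjectivity of $\bar{\phi}$ on each graded piece, this forces $\bar{\phi}$ to be a bijection in each degree and hence an isomorphism of $\mathrm{Im}\,\pi$-graded algebras.

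There is really no serious obstacle here; the only point requiring care is the bookkeeping which translates the coarsening operation $\pi_*$ into a direct sum indexed by the fiber $\pi^{-1}(\vec{x})$, and the observation that $\mathrm{mult}$ vanishes outside the ``positive cone,'' so the sum in (AH2) is effectively finite and the dimension comparison is well-defined. Everything else is a direct consequence of (\ref{equ:dim}) and (AH2).
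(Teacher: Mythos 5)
Your proof is correct and follows essentially the same route as the paper: compare dimensions degree by degree using (\ref{equ:dim}) together with (AH2), and conclude that a graded surjection between components of equal finite dimension is an isomorphism. The extra bookkeeping you supply (identifying $(\pi_*S(\mathbf{q},\mu))_{\vec{x}}$ with the direct sum over the fiber, and noting why the relevant sums are finite) is exactly what the paper leaves implicit.
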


\begin{proof}
Let $\vec{x}\in {\rm Im}\pi$. We apply (\ref{equ:dim}) and (AH2) to infer that ${\rm dim}_k\;  (\pi_*S(\mathbf{q}, \mathbf{\mu}))_{\vec{x}}={\rm dim}_k \; S(\mathbf{p}, \mathbf{\lambda})_{\vec{x}}$, both of which are finite. By the surjectivity assumption, we infer that  $\bar{\phi}$ is bijective on each degree. Then we are done.
\end{proof}

We recall from \cite{DGNO,De,CCZ} the equivariantization briefly. Let $G$ be a group with unit $e$ and $\mathcal{A}$ be a category.  A \emph{strict} $G$-action on $\mathcal{A}$ is a group homomorphism from $G$ to the automorphism group of $\mathcal{A}$, which assigns for each $g\in G$ an automorphism $F_g$ on $\mathcal{A}$. Hence, we have $F_e={\rm Id}_\mathcal{A}$ and $F_g F_h=F_{gh}$. Temporarily, we write $G$ mutliplicatively.

A \emph{$G$-equivariant object} in $\mathcal{A}$ is a pair $(X, \alpha)$, where $X$ is an object in $\mathcal{A}$ and $\alpha$ assigns to each $g\in G$ an isomorphism $\alpha_g\colon X\rightarrow F_g(X)$ subject to the relations $\alpha_{gh}=F_g(\alpha_h)\circ \alpha_g$. A morphism $f\colon (X, \alpha)\rightarrow (Y, \beta)$ between equivariant objects is  a morphism $f\colon X\rightarrow Y$ in $\mathcal{A}$ satisfying $\beta_g\circ f=F_g(f)\circ \alpha_g$. This gives rise to the category $\mathcal{A}^G$ of equivariant objects.

We observe if $\mathcal{A}$ is abelian, so is $\mathcal{A}^G$. Indeed, a sequence of equivariant objects is exact in $\mathcal{A}^G$ if and only if so is the sequence of underlying objects in $\mathcal{A}$.

For each subgroup $N\subseteq L(\mathbf{q})$, we have a strict $N$-action on ${\rm mod}^{L(\mathbf{q})}\mbox{-}S(\mathbf{q}, \mathbf{\mu})$ by setting $F_{\vec{x}}=(-\vec{x})$ for each $\vec{x}\in N$. Here, $(-\vec{x})$ is the degree-shift functor  by the element $-\vec{x}$. This $N$-action induces strict $N$-actions on both ${\rm qmod}^{L(\mathbf{q})}\mbox{-}S(\mathbf{q}, \mathbf{\mu})$ and ${\rm coh}\mbox{-}\mathbb{X}(\mathbf{q}, \mathbf{\mu})$. These resulted $N$-actions are called the \emph{degree-shift actions}. In particular, we will consider the categories $({\rm qmod}^{L(\mathbf{q})}\mbox{-}S(\mathbf{q}, \mathbf{\mu}))^N$ and $({\rm coh}\mbox{-}\mathbb{X}(\mathbf{q}, \mathbf{\mu}))^N$ of $N$-equivariant objects.

\begin{prop}\label{prop:equiv}
Let $\pi\colon L(\mathbf{q})\rightarrow L(\mathbf{p})$ be an admissible homomorphism. Assume that the algebra homomorphism $\phi\colon S(\mathbf{q}, \mathbf{\mu})\rightarrow S(\mathbf{p}, \mathbf{\lambda})$ induces a surjective homomorphism $\bar{\phi}$ in {\rm (\ref{equ:phi})}. Then $\bar{\phi}\colon \colon \pi_*S(\mathbf{q}, \mathbf{\mu})\rightarrow S(\mathbf{p}, \mathbf{\lambda})_{{\rm Im}\pi}$ is an isomorphism of ${\rm Im}\pi$-graded algebras, and thus we have an equivalence of categories
\begin{align*}
({\rm coh}\mbox{-}\mathbb{X}(\mathbf{q}, \mathbf{\mu}))^{{\rm Ker}\pi}\stackrel{\sim}\longrightarrow {\rm coh}\mbox{-}\mathbb{X}(\mathbf{p}, \mathbf{\lambda}).
\end{align*}
\end{prop}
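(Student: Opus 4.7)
\emph{Part 1: Algebra isomorphism.} Since $\pi$ is admissible it satisfies condition (AH2), and the surjectivity of $\bar{\phi}$ is assumed. Lemma \ref{lem:phi} then applies directly to conclude that $\bar{\phi}\colon\pi_*S(\mathbf{q},\mathbf{\mu})\to S(\mathbf{p},\mathbf{\lambda})_{{\rm Im}\pi}$ is an isomorphism of ${\rm Im}\pi$-graded algebras.

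\emph{Part 2: Categorical equivalence.} Set $N={\rm Ker}\pi$ and $K={\rm Im}\pi\cong L(\mathbf{q})/N$. Note that $N$ is finite, being a torsion subgroup of the rank-one abelian group $L(\mathbf{q})$. My plan is to assemble the equivalence through the chain
$$({\rm coh}\mbox{-}\mathbb{X}(\mathbf{q},\mathbf{\mu}))^N \stackrel{\sim}\longrightarrow {\rm qmod}^{K}\mbox{-}\pi_*S(\mathbf{q},\mathbf{\mu}) \stackrel{\sim}\longrightarrow {\rm qmod}^{K}\mbox{-}S(\mathbf{p},\mathbf{\lambda})_K \stackrel{\sim}\longrightarrow {\rm coh}\mbox{-}\mathbb{X}(\mathbf{p},\mathbf{\lambda}).$$
The second arrow is induced by the algebra isomorphism $\bar{\phi}$ of Part 1; the third is furnished by Lemma \ref{lem:eff} applied to the subgroup $K\subseteq L(\mathbf{p})$, which is effective by condition (AH1).

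The heart of the argument is the first equivalence, an instance of the general principle that equivariantization by degree-shifts along a subgroup $N\subseteq L(\mathbf{q})$ coincides with coarsening the grading to $L(\mathbf{q})/N$. I would construct this equivalence as follows. Given an equivariant object $(M,\alpha)$, the structural isomorphisms $\alpha_{\vec{z}}\colon M\to M(-\vec{z})$ for $\vec{z}\in N$ restrict on each homogeneous component to isomorphisms $M_{\vec{y}}\to M_{\vec{y}-\vec{z}}$, canonically identifying all components within each $\pi$-fibre. This produces a $K$-graded $\pi_*S(\mathbf{q},\mathbf{\mu})$-module $\bar{M}$ with $\bar{M}_k\cong M_{\vec{y}}$ for any $\vec{y}\in\pi^{-1}(k)$. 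In the opposite direction, a $K$-graded module pulls back along $\pi$ to an $L(\mathbf{q})$-graded module carrying a tautological $N$-equivariant structure given by identity maps. One verifies that these assignments are mutually inverse and descend to the ${\rm qmod}$ quotient categories.

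\emph{Main obstacle.} The most delicate point is this first equivalence. Unravelling the equivariant data at the level of graded modules is routine, but some care is required to check compatibility with the Serre quotient by finite-dimensional modules: naively, the pullback of a finitely generated $K$-graded module is typically not finitely generated as an $L(\mathbf{q})$-graded module, so one has to work at the level of quasi-coherent/coherent sheaves or with an appropriate $N$-averaging construction. The finiteness of $N$, together with the way admissibility conditions (AH1)--(AH2) match multiplicities of homogeneous components through the formula (\ref{equ:dim}), is exactly what makes the bookkeeping go through cleanly.
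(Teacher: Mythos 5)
Your argument is correct and follows essentially the same route as the paper: Lemma~\ref{lem:phi} gives the algebra isomorphism, and the categorical equivalence is assembled through the identical chain $({\rm coh}\mbox{-}\mathbb{X}(\mathbf{q},\mathbf{\mu}))^{{\rm Ker}\pi}\simeq {\rm qmod}^{{\rm Im}\pi}\mbox{-}\pi_*S(\mathbf{q},\mathbf{\mu})\simeq {\rm qmod}^{{\rm Im}\pi}\mbox{-}S(\mathbf{p},\mathbf{\lambda})_{{\rm Im}\pi}\simeq {\rm coh}\mbox{-}\mathbb{X}(\mathbf{p},\mathbf{\lambda})$, using Lemma~\ref{lem:eff} and (AH1) for the last step. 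The only divergence is that the paper delegates your ``first equivalence'' (equivariantization along $N={\rm Ker}\pi$ versus coarsening the grading) to \cite[Proposition 5.2 and Corollary 4.4]{CCZ} rather than constructing it by hand, and your worry about finite generation of pullbacks is harmless: since $N$ is finite, finite generation and finite-dimensionality are preserved in both directions, so the equivalence descends to the Serre quotients as you indicate.
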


\begin{proof}
We identify
$({\rm coh}\mbox{-}\mathbb{X}(\mathbf{q}, \mathbf{\mu}))^{{\rm Ker}\pi}$ and $({\rm qmod}^{L(\mathbf{q})}\mbox{-}S(\mathbf{q}, \mathbf{\mu}))^{{\rm Ker}\pi}$ via (\ref{equ:coh}). By \cite[Proposition 5.2 and Corollary 4.4]{CCZ}, these two categories are further equivalent to ${\rm qmod}^{{\rm Im}\pi}\mbox{-}\pi_*S(\mathbf{q}, \mathbf{\mu})$, which is isomorphic to ${\rm qmod}^{{\rm Im}\pi}\mbox{-}S(\mathbf{p}, \mathbf{\lambda})_{{\rm Im}\pi}$. Here, we use the isomorphism $\bar{\phi}$ in Lemma \ref{lem:phi}. Since the subgroup ${\rm Im}\pi\subseteq L(\mathbf{p})$ is effective, we are done by the identification in Lemma \ref{lem:eff}.
\end{proof}

\section{The proof of Theorem}

In this section, we study the homogeneous coordinate algebras of weighted projective lines of tubular type. We  prove Theorem in Propositions \ref{prop:A}, \ref{prop:B} and \ref{prop:C}.  We construct explicit admissible homomorphisms between the string groups and algebra homomorphisms between the homogeneous coordinate algebras, which satisfy the conditions in  Proposition \ref{prop:equiv}.

\subsection{}

We assume that $\mathbf{p}=(p_1, p_2, \cdots, p_t)$ is a weight sequence of tubular type and that $S(\mathbf{p}, \mathbf{\lambda})$ is the homogeneous coordinate algebra of the weighted projective line $\mathbb{X}(\mathbf{p}, \mathbf{\lambda})$. Here, $\mathbf{\lambda}=(\lambda_1, \lambda_2, \cdots, \lambda_t)$ is a normalized parameter sequence. Then we have $\mathbf{p}=(2,2,2,2)$, $(3,3,3)$, $(4,4,2)$ or $(6,3,2)$. Since the parameter sequence is normalized, it is trivial if the length of $\mathbf{p}$ is three. Hence,  we  might write $S(\mathbf{p}, \mathbf{\lambda})$ as $S(2,2,2,2;\lambda)$, $S(3,3,3)$, $S(4,4,2)$ or $S(6,3,2)$, according to their \emph{types}. Here, the scalar $\lambda\in k$ is not $0$ or $1$.

We list these homogeneous coordinate algebras explicitly as follows.
\begin{align*}
S(2,2,2,2; \lambda)&=k[X_1, X_2, X_3, X_4]/(X_3^2-(X_2^2-X_1^2), X_4^2-(X_2^2-\lambda X_1^2));\\
S(3,3,3)&=k[Y_1, Y_2, Y_3]/(Y_3^3-(Y_2^3-Y_1^3));\\
S(4,4,2)&=k[Z_1, Z_2, Z_3]/(Z_3^2-(Z_2^4-Z_1^4));\\
S(6,3,2)&=k[U_1, U_2, U_3]/(U_3^2-(U_2^3-U_1^6)).
\end{align*}
Here, we use different letters for the generators to avoid confusion. Moreover, we will use letters in the lower case to represent their images in the quotient algebras. For example, $y_i$ will represent the image of $Y_i$ in $S(3,3,3)$.

\subsection{} In this subsection, we will relate weighted projective lines $\mathbb{X}(2,2,2,2; -1)$ to $\mathbb{X}(4,4,2)$. Here, we require that the field $k$ is not of characteristic two.

We consider the corresponding string groups. Recall that $L(2,2,2,2)$ is generated by $\vec{x}_1, \vec{x}_2, \vec{x}_3$ and $\vec{x}_4$ with the relations $2\vec{x}_1=2\vec{x}_2=2\vec{x}_3=2\vec{x}_4$. The string group $L(4,4,2)$ is generated by $\vec{z}_1, \vec{z}_2$ and $\vec{z}_3$ with the relations $4\vec{z}_1=4\vec{z}_2=2\vec{z}_3$. Then we have a well-defined group homomorphism $\pi\colon L(4,4,2)\rightarrow L(2,2,2,2)$ by $\pi(\vec{z}_1)=\vec{x}_1$, $\pi(\vec{z}_2)=\vec{x}_2$ and $\pi(\vec{z}_3)=\vec{x}_3+\vec{x}_4$.

\begin{lem}\label{lem:A}
The above defined group homomorphism $\pi\colon L(4,4,2)\rightarrow L(2,2,2,2)$ is admissible with ${\rm Ker}\pi=\{0, 2\vec{z}_1+2\vec{z}_2-\vec{c}\}=\mathbb{Z}(2\vec{\omega})$.
\end{lem}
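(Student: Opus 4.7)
The plan is to check each requirement of the admissibility definition in turn, along with the identification of the kernel.

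First, I verify that $\pi$ is well defined on the presentation of $L(4,4,2)$. Using $2\vec{x}_i=\vec{c}'$ (the canonical element of $L(2,2,2,2)$) for each $i$, the three potential relations $4\vec{x}_1$, $4\vec{x}_2$ and $2(\vec{x}_3+\vec{x}_4)$ all equal $2\vec{c}'$, so the map descends. Condition (AH1) is immediate, since $\pi({\rm Im}\,\pi)$ contains $\vec{x}_1$, $\vec{x}_2$ and $\vec{x}_3+\vec{x}_4$, and these cover every projection $\pi_i\colon L(2,2,2,2)\to\mathbb{Z}/2\mathbb{Z}$ with $\bar 1$.

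Next, I compute ${\rm Ker}\,\pi$. Writing $\vec{y}=l\vec{c}+l_1\vec{z}_1+l_2\vec{z}_2+l_3\vec{z}_3$ in its normal form, and then normalising $\pi(\vec{y})=2l\vec{c}'+l_1\vec{x}_1+l_2\vec{x}_2+l_3\vec{x}_3+l_3\vec{x}_4$ inside $L(2,2,2,2)$ by reducing $l_1,l_2$ modulo $2$, the equation $\pi(\vec{y})=0$ forces $l_3=0$ and $l_1,l_2\in\{0,2\}$, after which comparing the $\vec{c}'$-coefficient leaves only the two possibilities $\vec{y}=0$ and $\vec{y}=2\vec{z}_1+2\vec{z}_2-\vec{c}$. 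To identify this with $\mathbb{Z}(2\vec{\omega})$, I expand $\vec{\omega}=\vec{c}-\vec{z}_1-\vec{z}_2-\vec{z}_3$ and use the relation $2\vec{z}_3=\vec{c}$ to get $2\vec{\omega}=\vec{c}-2\vec{z}_1-2\vec{z}_2=-(2\vec{z}_1+2\vec{z}_2-\vec{c})$, which generates the same order-two subgroup.

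The remaining (AH2) is the most computational step, but reduces to elementary max-arithmetic. Because $|{\rm Ker}\,\pi|=2$, every non-empty fiber $\pi^{-1}(\vec{x})$ consists of exactly two elements $\vec{y}_0$ and $\vec{y}_0'=\vec{y}_0+(2\vec{z}_1+2\vec{z}_2-\vec{c})$. I split into four cases according to whether each of the normal-form coefficients $l_1,l_2$ of $\vec{y}_0$ lies in $\{0,1\}$ or in $\{2,3\}$; in each case the normal form of $\vec{y}_0'$ and of $\vec{x}=\pi(\vec{y}_0)$ are read off, and the two multiplicities attached to the fiber turn out to be of the shape $\max(l+a,0)$ and $\max(l+b,0)$ with $|a-b|\leq 1$, while ${\rm mult}(\vec{x})=\max(2l+a+b,0)$. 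Thus the claim collapses in every case to the trivial identity $\max(a,0)+\max(b,0)=\max(a+b,0)$ for integers with $|a-b|\leq 1$. The main obstacle is purely bookkeeping: keeping the normal-form reductions in $L(4,4,2)$ and in $L(2,2,2,2)$ synchronised across the four cases, particularly in the mixed case where only one of the generators needs to absorb a copy of $\vec{c}$.
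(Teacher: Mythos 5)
Your proposal is correct and follows essentially the same route as the paper: verify effectiveness of the image, compute the fibers explicitly via normal forms (you realize them as cosets of the kernel and split on the ranges of $l_1,l_2$, whereas the paper splits on the parity of the $\vec{c}$-coefficient of the image element — the same computation organized differently), and reduce (AH2) to the elementary identity $\max(A,0)+\max(B,0)=\max(A+B,0)$ for $|A-B|\leq 1$, which is exactly the content of the paper's displayed identities (\ref{equ:1}) and (\ref{equ:2}).
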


Here, we recall that the dualizing element $\vec{\omega}=\vec{c}-\vec{z}_1-\vec{z}_2-\vec{z}_3$ in $L(4,4,2)$ has order four. The cyclic subgroup generated by $2\vec{\omega}=2\vec{z}_1+2\vec{z}_2-\vec{c}$ is denoted by $\mathbb{Z}(2\vec{\omega})$.

\begin{proof}
We observe that ${\rm Im}\pi\subseteq L(2,2,2,2)$ is effective. Write an element $\vec{x}\in L(2,2,2,2)$  in its normal form $\vec{x}=l\vec{c}+l_1\vec{x}_1+l_2\vec{x}_2+l_3\vec{x}_3+l_4\vec{x}_4$ with each $l_i\in\{0, 1\}$. We observe that $\vec{x}$ lies in ${\rm Im}\pi$ if and only if $l_3=l_4$.

For an element $\vec{z}=r\vec{c}+r_1\vec{z}_1+r_2\vec{z}_2+r_3\vec{z}_3\in L(4,4,2)$ in its normal form, we have $\pi(\vec{z})=2r\vec{c}+r_1\vec{x}_1+r_2\vec{x}_2+r_3\vec{x}_3+r_3\vec{x}_4$. This expression might not be a normal form. Indeed, it depends on whether $r_1$ and $r_2$ are larger than two or not.

We now prove (AH2) for the above $\vec{x}\in {\rm Im}\pi$.  Indeed, if $l$ is even, we infer from the above analysis that $\pi^{-1}(\vec{x})=\{\frac{l}{2}\vec{c}+l_1\vec{z}_1+l_2\vec{z}_2+l_3\vec{z}_3, \frac{l-2}{2}\vec{c}+(l_1+2)\vec{z}_1+(l_2+2)\vec{z}_2+l_3\vec{z}_3\}$, which implies the statement on the kernel of $\pi$. Then the required identity in (AH2) follows by
\begin{align}\label{equ:1}
{\rm max}\{\frac{l}{2}+1, 0\} +{\rm max}\{\frac{l-2}{2}+1, 0\}={\rm max}\{l+1, 0\},\end{align}
which holds for all even numbers $l$. Here, we consult the definition of the map ``${\rm mult}$".  Similarly, if $l$ is odd, we have that $\pi^{-1}(\vec{x})=\{\frac{l-1}{2}\vec{c}+(l_1+2)\vec{z}_1+l_2\vec{z}_2+l_3\vec{z}_3, \frac{l-1}{2}\vec{c}+l_1\vec{z}_1+(l_2+2)\vec{z}_2+l_3\vec{z}_3\}$. Then the required identity in (AH2) follows by
\begin{align}\label{equ:2}
{\rm max}\{\frac{l-1}{2}+1, 0\}+{\rm max}\{\frac{l-1}{2}+1, 0\}={\rm max}\{l+1, 0\},
\end{align}
which holds for all odd numbers $l$. We are done.
\end{proof}

We compare the relations in the homogeneous coordinate algebras $S(2,2,2,2; -1)$ and $S(4,4,2)$. There is a well-defined algebra homomorphism $\phi\colon S(4,4,2)\rightarrow S(2,2,2,2;-1)$ by $\phi(z_1)=x_1$, $\phi(z_2)=x_2$ and $\phi(z_3)=x_3x_4$.

\begin{prop}\label{prop:A}
The above defined homomorphisms $\pi\colon L(4,4,2)\rightarrow L(2,2,2,2)$ and $\phi\colon S(4,4,2)\rightarrow S(2,2,2,2;-1)$ satisfy the conditions in Proposition {\rm \ref{prop:equiv}}. Consequently, we have an isomorphism of ${\rm Im}\pi$-graded algebras $$\bar{\phi}\colon \pi_*S(4,4,2)\stackrel{\sim}\longrightarrow S(2,2,2,2; -1)_{{\rm Im}\pi}$$
and an equivalences of abelian categories
$$({\rm coh}\mbox{-}\mathbb{X}(4,4,2))^{\mathbb{Z}(2\vec{\omega})}\stackrel{\sim}\longrightarrow {\rm coh}\mbox{-}\mathbb{X}(2,2,2,2;-1).$$
\end{prop}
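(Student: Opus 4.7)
The plan is to invoke Proposition \ref{prop:equiv}, which has two hypotheses: admissibility of $\pi$ and surjectivity of the induced map $\bar{\phi}\colon \pi_* S(4,4,2) \to S(2,2,2,2;-1)_{{\rm Im}\pi}$. The former is already supplied by Lemma \ref{lem:A}, which additionally identifies ${\rm Ker}\pi = \mathbb{Z}(2\vec{\omega})$; this matches the acting group appearing in the target equivalence.

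First I would verify that $\phi$ is well-defined and compatible with the gradings via $\pi$. Grading compatibility is immediate on generators: $\phi(z_i) \in S(2,2,2,2;-1)_{\pi(\vec{z}_i)}$ for $i=1,2,3$, since $\pi(\vec{z}_3) = \vec{x}_3+\vec{x}_4$. For well-definedness, the single relation $Z_3^2 - (Z_2^4 - Z_1^4)$ of $S(4,4,2)$ must map to $0$. The key identity is
$$x_3^2 x_4^2 = (x_2^2 - x_1^2)(x_2^2 + x_1^2) = x_2^4 - x_1^4,$$
obtained by multiplying the two quadratic relations of $S(2,2,2,2;-1)$, where the specific value $\lambda = -1$ is essential to produce the correct factorization.

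The main step is surjectivity of $\bar{\phi}$. I would use the explicit monomial basis of $S(2,2,2,2;-1)_{\vec{x}}$ recalled after (\ref{equ:dim}): for $\vec{x} = l\vec{c} + l_1\vec{x}_1 + l_2\vec{x}_2 + l_3\vec{x}_3 + l_4\vec{x}_4$ in normal form, the basis consists of monomials $x_1^{2a+l_1} x_2^{2b+l_2} x_3^{l_3} x_4^{l_4}$ with $a+b = l$ and $a,b \geq 0$. For $\vec{x} \in {\rm Im}\pi$, the analysis in Lemma \ref{lem:A} shows $l_3 = l_4 \in \{0,1\}$, so $x_3^{l_3} x_4^{l_4} = (x_3 x_4)^{l_3}$, and each basis element is visibly $\phi(z_1^{2a+l_1} z_2^{2b+l_2} z_3^{l_3})$. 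Once surjectivity is established, Lemma \ref{lem:phi} promotes $\bar{\phi}$ to an isomorphism, and Proposition \ref{prop:equiv} delivers the equivalence.

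The principal (though modest) obstacle is the surjectivity argument: it requires pairing the explicit monomial basis with actual images of $\phi$, and the pairing only closes under the condition $l_3 = l_4$, which is precisely the condition cutting out ${\rm Im}\pi$. The forced choice $\lambda = -1$ reflects the same underlying factorization $x_2^4 - x_1^4 = x_3^2 \cdot x_4^2$; any other parameter would break both the well-definedness of $\phi$ and the surjectivity of $\bar{\phi}$.
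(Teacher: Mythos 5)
Your proposal is correct and follows essentially the same route as the paper: check the grading compatibility and well-definedness of $\phi$ via the factorization $x_2^4-x_1^4=(x_2^2-x_1^2)(x_2^2+x_1^2)=x_3^2x_4^2$ forced by $\lambda=-1$, establish surjectivity of $\bar{\phi}$ by hitting the explicit monomial basis $\{x_1^{2a+l_1}x_2^{2b+l_2}(x_3x_4)^{l_3}\}$ of each homogeneous component indexed by ${\rm Im}\,\pi$ (where $l_3=l_4$), and then conclude via Lemma \ref{lem:phi} and Proposition \ref{prop:equiv}. No gaps.
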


\begin{proof}
We observe that $\phi(S(4,4,2)_{\vec{z}})\subseteq S(2,2,2,2;-1)_{\pi(\vec{z})}$ for each $\vec{z}\in L(4,4,2)$. By Lemma \ref{lem:A}, it suffices to claim that $\bar{\phi}$ is surjective, equivalently, the homogeneous component $S(2,2,2,2;-1)_{\vec{x}}$ is generated by $x_1$, $x_2$ and $x_3x_4$, whenever $\vec{x}$ lies in ${\rm Im}\pi$. We recall that such $\vec{x}$ has its normal form $l\vec{c}+l_1\vec{x}_1+l_2\vec{x}_2+l_3(\vec{x}_3+\vec{x}_4)$. Then
$\{x_1^{2a}x_2^{2b}x_1^{l_1}x_2^{l_2}(x_3x_4)^{l_3}\; |\; a+b=l, a, b\geq 0\}$ is a basis of $S(2,2,2,2;-1)_{\vec{x}}$, proving the claim. The remaining statements follow from Proposition \ref{prop:equiv}.
\end{proof}

\subsection{} In this subsection, we will relate weighted projective lines $\mathbb{X}(2,2,2,2;\epsilon)$ to $\mathbb{X}(6,3,2)$. Here, $\epsilon\in k$ satisfies $\epsilon^2-\epsilon+1=0$.  We assume further that there exists a nonzero $\Delta\in k$ satisfying $\Delta^2=6\epsilon-3$; in particular, the field $k$ is not of characteristic three. Indeed, if $k=\mathbb{C}$ is the field of complex numbers, we might take $\epsilon=\frac{1+\sqrt{-3}}{2}$ and $\Delta=\sqrt[4]{-27}$.

Recall that the string group $L(6,3,2)$ is generated by $\vec{u}_1$, $\vec{u}_2$ and $\vec{u}_3$ subject to the relations $6\vec{u}_1=3\vec{u}_2=2\vec{u}_3$. The homogeneous coordinate algebra $S(6,3,2)$ is $L(6,3,2)$-graded by means of ${\rm deg}\; u_i=\vec{u}_i$.

There is a well-defined group homomorphism $\pi\colon L(6,3,2)\rightarrow L(2,2,2,2)$ given by $\pi(\vec{u}_1)=\vec{x}_4$, $\pi(\vec{u}_2)=\vec{c}$ and $\pi(\vec{u}_3)=\vec{x}_1+\vec{x}_2+\vec{x}_3$. Here, $\vec{c}$  is the canonical element in $L(2,2,2,2)$.

\begin{lem}\label{lem:B}
The group homomorphism $\pi\colon L(6,3,2)\rightarrow L(2,2,2,2)$ is admissible with ${\rm Ker}\pi=\{0, 4\vec{u}_1+\vec{u}_2-\vec{c}, 2\vec{u}_1+2\vec{u}_2-\vec{c}\}=\mathbb{Z}(2\vec{\omega})$.
\end{lem}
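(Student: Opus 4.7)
The plan is to model the argument on the proof of Lemma~\ref{lem:A}. For (AH1), I will note that $\pi(\vec{u}_1)=\vec{x}_4$ provides $\bar{1}\in\pi_4({\rm Im}\,\pi)$, while $\pi(\vec{u}_3)=\vec{x}_1+\vec{x}_2+\vec{x}_3$ simultaneously provides $\bar{1}\in\pi_i({\rm Im}\,\pi)$ for $i=1,2,3$; hence ${\rm Im}\,\pi$ is effective.

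Next I will pin down ${\rm Ker}\,\pi$. Reducing $\vec{\omega}=\vec{c}-\vec{u}_1-\vec{u}_2-\vec{u}_3$ to normal form by rewriting each $-\vec{u}_i$ as $(p_i-1)\vec{u}_i-\vec{c}$ gives $2\vec{\omega}=-\vec{c}+4\vec{u}_1+\vec{u}_2$ and $4\vec{\omega}=-\vec{c}+2\vec{u}_1+2\vec{u}_2$. A direct substitution, using $\pi(\vec{c})=3\vec{c}$ (which comes from $\pi(3\vec{u}_2)=3\vec{c}$), then shows $\pi(2\vec{\omega})=0$, so $\mathbb{Z}(2\vec{\omega})\subseteq{\rm Ker}\,\pi$. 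Since $\vec{\omega}$ has order six, $\mathbb{Z}(2\vec{\omega})$ has order three. For the reverse inclusion, I will parameterize a general $\vec{z}=r\vec{c}+r_1\vec{u}_1+r_2\vec{u}_2+r_3\vec{u}_3$ in normal form, expand $\pi(\vec{z})$, and read off from $\pi(\vec{z})=0$ the constraints $r_3=0$, $r_1\in\{0,2,4\}$ and $3r+r_2+r_1/2=0$ with $r_2\in\{0,1,2\}$; these admit precisely three solutions, matching the order of $\mathbb{Z}(2\vec{\omega})$.

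For (AH2), the verification is a three-case analysis analogous to the identities~(\ref{equ:1}) and~(\ref{equ:2}) in the proof of Lemma~\ref{lem:A}. Let $\vec{x}=l\vec{c}+l_1\vec{x}_1+l_2\vec{x}_2+l_3\vec{x}_3+l_4\vec{x}_4\in{\rm Im}\,\pi$ be in normal form; from the image description I will first record that $l_1=l_2=l_3$. In any preimage $\vec{z}=r\vec{c}+r_1\vec{u}_1+r_2\vec{u}_2+r_3\vec{u}_3$, the equation $\pi(\vec{z})=\vec{x}$ forces $r_3=l_1$ and $r_1\equiv l_4\pmod 2$; writing $r_1=2s+l_4$ with $s\in\{0,1,2\}$ reduces matters to choosing $(s,r_2)\in\{0,1,2\}^2$ with $s+r_2\equiv l\pmod 3$, and then $r=(l-r_2-s)/3$. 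Splitting on $l=3m+n$ with $n\in\{0,1,2\}$, the three $r$-values in $\pi^{-1}(\vec{x})$ come out to $\{m,m-1,m-1\}$, $\{m,m,m-1\}$ or $\{m,m,m\}$, and in each case the required identity
\[
\sum_{\vec{y}\in\pi^{-1}(\vec{x})}{\rm max}\{r(\vec{y})+1,0\}={\rm max}\{l+1,0\}
\]
reduces to a short arithmetic check separating the regimes $m\geq 0$, $m=-1$ and $m\leq-2$. This bookkeeping is the main (but entirely elementary) obstacle.
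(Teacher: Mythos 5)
Your proposal is correct and follows essentially the same route as the paper: check effectiveness directly, compute the three-element fibers $\pi^{-1}(\vec{x})$ explicitly (the kernel being the fiber over $0$), and verify (AH2) by the case split on $l \bmod 3$, where your multisets of $r$-values $\{m,m-1,m-1\}$, $\{m,m,m-1\}$, $\{m,m,m\}$ agree exactly with the coefficients of $\vec{c}$ in the paper's listed fibers. The only cosmetic difference is that you parameterize the fibers by $(s,r_2)$ rather than listing the normal forms, and you verify $\mathbb{Z}(2\vec{\omega})\subseteq\ker\pi$ by direct substitution before counting, whereas the paper reads the kernel off the fiber over $0$.
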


Here, we observe that the dualizing element $\vec{\omega}=\vec{c}-\vec{u}_1-\vec{u}_2-\vec{u}_3$ in $L(6,3,2)$ has order six. The cyclic subgroup $\mathbb{Z}(2\vec{\omega})$ generated  by $2\vec{\omega}=4\vec{u}_1+\vec{u}_2-\vec{c}$ has order three.

\begin{proof}
The argument is similar to the proof of Lemma \ref{lem:A}. We observe that the subgroup ${\rm Im}\pi\subseteq L(2,2,2,2)$ is effective. Any element $\vec{x}$ in ${\rm Im}\pi$ has its normal form $\vec{x}=l\vec{c}+l_1(\vec{x}_1+\vec{x}_2+\vec{x}_3)+l_4\vec{x}_4$ with $l_1, l_4\in \{0, 1\}$. We compute its inverse image $\pi^{-1}(\vec{x})$ as follows.

If $3$ divides $l$, we have $\pi^{-1}(\vec{x})=\{\frac{l}{3}\vec{c}+l_4\vec{u}_1+l_1\vec{u}_3, \frac{l-3}{3}\vec{c}+(l_4+2)\vec{u}_1+2\vec{u}_2+l_1\vec{u}_3, \frac{l-3}{3}\vec{c}+(l_4+4)\vec{u}_1+\vec{u}_2+l_1\vec{u}_3\}$. This also proves the statement on the kernel of $\pi$.  If $3$ divides $l-1$, we have $\pi^{-1}(\vec{x})=\{\frac{l-1}{3}\vec{c}+l_4\vec{u}_1+\vec{u}_2+l_1\vec{u}_3, \frac{l-1}{3}\vec{c}+(l_4+2)\vec{u}_1+l_1\vec{u}_3, \frac{l-4}{3}\vec{c}+(l_4+4)\vec{u}_1+2\vec{u}_2+l_1\vec{u}_3\}$. If $3$ divides $l-2$, we have $\pi^{-1}(\vec{x})=\{\frac{l-2}{3}\vec{c}+l_4\vec{u}_1+2\vec{u}_2+l_1\vec{u}_3, \frac{l-2}{3}\vec{c}+(l_4+2)\vec{u}_1+\vec{u}_2+l_1\vec{u}_3, \frac{l-2}{3}\vec{c}+(l_4+4)\vec{u}_1+l_1\vec{u}_3\}$. In all the three cases, the identity in (AH2) follows immediately from the definition of the map ``${\rm mult}$". For example, if $3$ divides $l-1$, we use the following identity
\begin{align*}
2{\rm max}\{\frac{l-1}{3}+1, 0\}+{\rm max}\{\frac{l-4}{3}+1, 0\}={\rm max}\{l+1, 0\}.
\end{align*}
We omit the remaining details.
\end{proof}

We compute in $S(2,2,2,2;\epsilon)$ the following identity $$x_4^6=(x_2^2+(\epsilon-1)x_1^2)^3-(\Delta x_1x_2x_3)^2.$$ It follows that the algebra homomorphism $\phi\colon S(6,3,2)\rightarrow S(2,2,2,2;\epsilon)$ given by $\phi(u_1)=x_4$, $\phi(u_2)=x_2^2+(\epsilon-1)x_1^2$ and $\phi(u_3)=\Delta x_1x_2x_3$ is well defined.

\begin{prop}\label{prop:B}
The above defined homomorphisms $\pi\colon L(6,3,2)\rightarrow L(2,2,2,2)$ and $\phi\colon S(6,3,2)\rightarrow S(2,2,2,2;\epsilon)$ satisfy the conditions in Proposition {\rm \ref{prop:equiv}}.  Consequently, we have an isomorphism of ${\rm Im}\pi$-graded algebras
$$\bar{\phi}\colon \pi_*S(6,3,2)\stackrel{\sim}\longrightarrow S(2,2,2,2; \epsilon)_{{\rm Im}\pi}$$
and an equivalences of abelian categories
$$({\rm coh}\mbox{-}\mathbb{X}(6,3,2))^{\mathbb{Z}(2\vec{\omega})}\stackrel{\sim}\longrightarrow {\rm coh}\mbox{-}\mathbb{X}(2,2,2,2;\epsilon).$$
\end{prop}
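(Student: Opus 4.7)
The plan is to mirror the structure of the proof of Proposition \ref{prop:A}: verify that $\phi$ is a well-defined graded algebra homomorphism, invoke Lemma \ref{lem:B} and Proposition \ref{prop:equiv}, and reduce matters to the surjectivity of $\bar{\phi}$ on each homogeneous component indexed by ${\rm Im}\pi$.

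First, I will check that $\phi$ kills the defining relation $U_3^2 - (U_2^3 - U_1^6)$ of $S(6,3,2)$. Substituting $x_3^2 = x_2^2 - x_1^2$ and $x_4^2 = x_2^2 - \epsilon x_1^2$ from the defining relations of $S(2,2,2,2;\epsilon)$, this reduces to the polynomial identity
\[
(x_2^2 + (\epsilon-1)x_1^2)^3 - (x_2^2 - \epsilon x_1^2)^3 = \Delta^2 x_1^2 x_2^2(x_2^2 - x_1^2)
\]
in $k[x_1, x_2]$. Expanding the left-hand side, the pure $x_1^6$ and $x_2^6$ terms cancel (the latter trivially, the former because $(\epsilon-1)^3 + \epsilon^3 = 0$, a consequence of $\epsilon^2 = \epsilon - 1$), while the mixed terms collapse to $3(2\epsilon - 1) x_1^2 x_2^2 (x_2^2 - x_1^2) = \Delta^2 x_1^2 x_2^2(x_2^2 - x_1^2)$. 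Grading compatibility $\phi(S(6,3,2)_{\vec{u}}) \subseteq S(2,2,2,2;\epsilon)_{\pi(\vec{u})}$ is immediate by inspection: the generators $x_4$, $x_2^2 + (\epsilon-1)x_1^2$, and $\Delta x_1 x_2 x_3$ have degrees $\vec{x}_4 = \pi(\vec{u}_1)$, $\vec{c} = \pi(\vec{u}_2)$, and $\vec{x}_1 + \vec{x}_2 + \vec{x}_3 = \pi(\vec{u}_3)$.

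It remains to show surjectivity of $\bar{\phi}$. For $\vec{x} \in {\rm Im}\pi$ in its normal form $l\vec{c} + l_1(\vec{x}_1 + \vec{x}_2 + \vec{x}_3) + l_4\vec{x}_4$, we have ${\rm dim}_k\; S(2,2,2,2;\epsilon)_{\vec{x}} = l+1$, so I need $l+1$ linearly independent elements in the image. A natural choice is
\[
f_j = \phi(u_1^{2j + l_4}\, u_2^{l-j}\, u_3^{l_1}) = x_4^{l_4}(\Delta x_1 x_2 x_3)^{l_1}\cdot (x_2^2 - \epsilon x_1^2)^j (x_2^2 + (\epsilon-1)x_1^2)^{l-j}
\]
for $j = 0, 1, \ldots, l$, obtained after rewriting $x_4^{2j}$ via $x_4^2 = x_2^2 - \epsilon x_1^2$. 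Once the nonzero common prefactor is divided out, linear independence of the $f_j$ reduces to that of $L_1^j L_2^{l-j}$ for the two linear forms $L_1 = x_2^2 - \epsilon x_1^2$ and $L_2 = x_2^2 + (\epsilon-1)x_1^2$ in the degree-$l$ part of $k[x_1^2, x_2^2]$; distinctness of $L_1$ and $L_2$ amounts to $2\epsilon - 1 \neq 0$, which is guaranteed by the hypothesis $\Delta^2 = 6\epsilon - 3 \neq 0$.

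The main obstacle is the polynomial identity in the first step: it pins down the precise form of $\phi$ and uses both hypotheses $\epsilon^2 - \epsilon + 1 = 0$ and $\Delta^2 = 6\epsilon - 3$ in an essential way. Once this identity is in hand, the rest is routine bookkeeping, entirely parallel to Proposition \ref{prop:A}.
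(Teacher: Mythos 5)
Your proposal is correct and takes essentially the same route as the paper: verify the defining identity $x_4^6=(x_2^2+(\epsilon-1)x_1^2)^3-(\Delta x_1x_2x_3)^2$ (which the paper asserts without computation), invoke Lemma \ref{lem:B}, and reduce to surjectivity of $\bar{\phi}$ on each component of ${\rm Im}\pi$. The only cosmetic difference is that you prove surjectivity by exhibiting $l+1$ linearly independent elements in the image and counting dimensions, whereas the paper shows the standard basis monomials $x_1^{2a}x_2^{2b}(x_1x_2x_3)^{l_1}x_4^{l_4}$ lie in the span; both arguments rest on the same observation that $x_4^2=x_2^2-\epsilon x_1^2$ and $x_2^2+(\epsilon-1)x_1^2$ span $x_1^2$ and $x_2^2$, i.e.\ $2\epsilon-1\neq 0$.
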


\begin{proof}
This is similar to the proof of Proposition \ref{prop:A}. It suffices to claim that the homogenous component  $S(2,2,2,2; \epsilon)_{\vec{x}}$ is generated by $x_4$, $x_2^2+(\epsilon-1)x_1^2$ and $x_1x_2x_3$, whenever $\vec{x}$ lies in ${\rm Im}\pi$. Such an element $\vec{x}$ has its normal form $\vec{x}=l\vec{c}+l_1(\vec{x}_1+\vec{x}_2+\vec{x}_3)+l_4\vec{x}_4$. It follows that $\{x_1^{2a}x_2^{2b}(x_1x_2x_3)^{l_1}x_4^{l_4}\; |\; a+b=l, a, b\geq 0\}$ is a basis of $S(2,2,2,2; \epsilon)_{\vec{x}}$. Then the claim follows immediately, once we observe that $x_4^2=x_2^2-\epsilon x_1^2$ and $x_2^2+(\epsilon-1)x_1^2$ linearly span $x_1^2$ and $x_2^2$.
\end{proof}

\subsection{} In this subsection, we will relate  weighted projective lines $\mathbb{X}(3,3,3)$ to $\mathbb{X}(6,3,2)$. Here, we require that the field $k$ is not of characteristic two, and that $\sqrt{-1}$ and $\sqrt[3]{-4}$ exist in $k$.

Recall that the string group $L(3,3,3)$ is generated by $\vec{y}_1$, $\vec{y}_2$ and $\vec{y}_3$ subject to the relations $3\vec{y}_1=3\vec{y}_2=3\vec{y}_3$. The homogeneous coordinate algebra $S(3,3,3)$ is $L(3,3,3)$-graded by means of ${\rm deg}\; y_i=\vec{y}_i$.

There is a well-defined group homomorphism $\pi\colon L(6,3,2)\rightarrow L(3,3,3)$ given by $\pi(\vec{u}_1)=\vec{y}_3$, $\pi(\vec{u}_2)=\vec{y}_1+\vec{y}_2$ and $\pi(\vec{u}_3)=\vec{c}$. Here, $\vec{c}$ is the canonical element in $L(3,3,3)$.

\begin{lem}\label{lem:C}
The group homomorphism $\pi\colon L(6,3,2)\rightarrow L(3,3,3)$ is admissible with ${\rm Ker}\pi=\{0, 3\vec{u}_1+\vec{u}_3-\vec{c}\}=\mathbb{Z}(3\vec{\omega})$.
\end{lem}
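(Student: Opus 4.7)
The approach mirrors the proofs of Lemmas \ref{lem:A} and \ref{lem:B}: verify (AH1) by inspection, parameterize fibers via normal forms and check (AH2) through a parity case analysis, and finally read off the kernel and identify it with $\mathbb{Z}(3\vec{\omega})$. Since $\pi(\vec{u}_1)=\vec{y}_3$ and $\pi(\vec{u}_2)=\vec{y}_1+\vec{y}_2$, the image contains $\bar{1}$ under each of the three projections $L(3,3,3)\to\mathbb{Z}/3\mathbb{Z}$, so ${\rm Im}\pi$ is effective; and an element $\vec{y}=l\vec{c}+l_1\vec{y}_1+l_2\vec{y}_2+l_3\vec{y}_3$ in normal form lies in ${\rm Im}\pi$ precisely when $l_1=l_2$.

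Next I will parameterize the fibers. A direct computation (noting $\pi(\vec{c})=2\vec{c}$, since $\pi(3\vec{u}_2)=3(\vec{y}_1+\vec{y}_2)=2\vec{c}$) gives
$$\pi(r\vec{c}+r_1\vec{u}_1+r_2\vec{u}_2+r_3\vec{u}_3)=(2r+r_3)\vec{c}+r_2\vec{y}_1+r_2\vec{y}_2+r_1\vec{y}_3,$$
which must be renormalized when $r_1\in\{3,4,5\}$ (adding $\vec{c}$ and replacing $r_1$ by $r_1-3$). For a given $\vec{y}\in{\rm Im}\pi$ with $l_1=l_2$ and $l_3\in\{0,1,2\}$, the preimages are obtained by forcing $r_2=l_1$, choosing $r_1\in\{l_3,l_3+3\}$, and solving $2r+r_3+\lfloor r_1/3\rfloor=l$ for $(r,r_3)$ with $r_3\in\{0,1\}$. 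A split on the parity of $l$ shows that in each case the fiber has exactly two elements, and (AH2) then reduces to the elementary identities ${\rm max}\{m+1,0\}+{\rm max}\{m,0\}={\rm max}\{2m+1,0\}$ (for $l=2m$) and $2{\rm max}\{m+1,0\}={\rm max}\{2m+2,0\}$ (for $l=2m+1$), in the spirit of (\ref{equ:1}) and (\ref{equ:2}).

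For the kernel, specializing the fiber construction to $\vec{y}=0$ yields the two preimages $0$ and $-\vec{c}+3\vec{u}_1+\vec{u}_3$, so ${\rm Ker}\pi=\{0,\,3\vec{u}_1+\vec{u}_3-\vec{c}\}$. To identify this with $\mathbb{Z}(3\vec{\omega})$, I will simplify $3\vec{\omega}=3\vec{c}-3\vec{u}_1-3\vec{u}_2-3\vec{u}_3$ using $3\vec{u}_2=\vec{c}$ and $2\vec{u}_3=\vec{c}$ in $L(6,3,2)$ to obtain $3\vec{\omega}=\vec{c}-3\vec{u}_1-\vec{u}_3$; since $\vec{\omega}$ has order six, $3\vec{\omega}$ has order two, so $\mathbb{Z}(3\vec{\omega})=\{0,\,-3\vec{\omega}\}=\{0,\,3\vec{u}_1+\vec{u}_3-\vec{c}\}$, matching the computed kernel. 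I expect the main obstacle to be the parity bookkeeping in the fiber enumeration: unlike in Lemma \ref{lem:A}, where $\pi$ preserves $\vec{c}$, here $\pi(\vec{c})=2\vec{c}$, which is precisely what doubles the fiber and forces the case analysis modulo $2$ rather than permitting a single-case computation.
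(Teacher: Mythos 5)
Your proposal is correct and follows essentially the same route as the paper's proof: effectiveness by inspection, enumeration of the two-element fibers in normal form via a parity split on $l$, the identities $\max\{m+1,0\}+\max\{m,0\}=\max\{2m+1,0\}$ and $2\max\{m+1,0\}=\max\{2m+2,0\}$ (the paper's (\ref{equ:1}) and (\ref{equ:2})) for (AH2), and the simplification $3\vec{\omega}=\vec{c}-3\vec{u}_1-\vec{u}_3$ for the kernel. Your fibers match the paper's lists exactly (with $m=\lfloor l/2\rfloor$), and your explicit bookkeeping of $\pi(\vec{c})=2\vec{c}$ and the renormalization of $r_1$ merely spells out what the paper leaves implicit.
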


Here, we observe that the dualizing element $\vec{\omega}=\vec{c}-\vec{u}_1-\vec{u}_2-\vec{u}_3$ in $L(6,3,2)$ has order six. The cyclic subgroup $\mathbb{Z}(3\vec{\omega})$ generated  by $3\vec{\omega}=3\vec{u}_1+\vec{u}_3-\vec{c}$ has order two.

\begin{proof}
The argument is similar to the proof of Lemma \ref{lem:A}.  We observe that the subgroup ${\rm Im}\pi\subseteq L(3,3,3)$ is effective. Any element $\vec{y}$ in ${\rm Im}\pi$ has its normal form $\vec{y}=l\vec{c}+l_1(\vec{y}_1+\vec{y}_2)+l_3\vec{y}_3$ with $l_1, l_3\in \{0,1,2\}$. We now describe the inverse image $\pi^{-1}(\vec{y})$.

If $l$ is even, we have $\pi^{-1}(\vec{y})=\{\frac{l}{2}\vec{c}+l_3\vec{u}_1+l_1\vec{u}_2, \frac{l-2}{2}\vec{c}+(l_3+3)\vec{u}_1+l_1\vec{u}_2+\vec{u}_3\}$. This also proves the statement for the kernel.
If $l$ is odd, we have $\pi^{-1}(\vec{y})=\{\frac{l-1}{2}\vec{c}+l_3\vec{u}_1+l_1\vec{u}_2+\vec{u}_3, \frac{l-1}{2}\vec{c}+(l_3+3)\vec{u}_1+l_1\vec{u}_2\}$. In both cases, the identity in (AH2) follows immediately from the definition of the map ``${\rm mult}$". We omit the details.
\end{proof}

We compute in $S(3,3,3)$ that $y_3^6=(y_1^3+y_2^3)^2-4(y_1y_2)^3$. Then we have a well-defined algebra homomorphism $\phi\colon S(6,3,2)\rightarrow S(3,3,3)$ given by $\phi(u_1)=y_3$, $\phi(u_2)=\sqrt[3]{-4}(y_1y_2)$ and $\phi(u_3)=\sqrt{-1}(y_1^3+y_2^3)$.

\begin{prop}\label{prop:C}
The above defined homomorphisms $\pi \colon L(6,3,2)\rightarrow L(3,3,3)$ and $\phi\colon S(6,3,2)\rightarrow S(3,3,3)$ satisfy the conditions in Proposition {\rm \ref{prop:equiv}}.  Consequently, we have an isomorphism of ${\rm Im}\pi$-graded algebras
$$\bar{\phi}\colon \pi_*S(6,3,2)\stackrel{\sim}\longrightarrow S(3,3,3)_{{\rm Im}\pi}$$
and an equivalences of abelian categories
$$({\rm coh}\mbox{-}\mathbb{X}(6,3,2))^{\mathbb{Z}(3\vec{\omega})}\stackrel{\sim}\longrightarrow {\rm coh}\mbox{-}\mathbb{X}(3,3,3).$$
\end{prop}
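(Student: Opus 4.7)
The plan is to apply Proposition~\ref{prop:equiv} to the admissible homomorphism $\pi$ and the algebra map $\phi$ constructed just above. By Lemma~\ref{lem:C}, $\pi$ is already admissible with $\mathrm{Ker}\,\pi = \mathbb{Z}(3\vec{\omega})$, so, in view of Lemma~\ref{lem:phi}, the only thing left to check is that $\phi$ respects the gradings and that the induced homomorphism $\bar{\phi}\colon \pi_*S(6,3,2) \to S(3,3,3)_{{\rm Im}\,\pi}$ is surjective on each homogeneous component. Grading compatibility is immediate on the generators: $\deg y_3 = \vec{y}_3 = \pi(\vec{u}_1)$, $\deg(y_1 y_2) = \vec{y}_1 + \vec{y}_2 = \pi(\vec{u}_2)$, and $\deg(y_1^3 + y_2^3) = \vec{c} = \pi(\vec{u}_3)$. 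Well-definedness of $\phi$ reduces to the identity $y_3^6 = (y_1^3 + y_2^3)^2 - 4(y_1 y_2)^3$, which is obtained by squaring the defining relation $y_3^3 = y_2^3 - y_1^3$ of $S(3,3,3)$.

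For the surjectivity of $\bar{\phi}$ I would fix $\vec{y} \in {\rm Im}\,\pi$ and put it in the normal form $\vec{y} = l\vec{c} + l_1(\vec{y}_1 + \vec{y}_2) + l_3\vec{y}_3$ exhibited in the proof of Lemma~\ref{lem:C}. By (\ref{equ:dim}), a $k$-basis of $S(3,3,3)_{\vec{y}}$ is $\{y_1^{3a}y_2^{3b}(y_1y_2)^{l_1}y_3^{l_3} : a+b = l,\ a, b \geq 0\}$. Letting $A \subseteq S(3,3,3)$ denote the subalgebra generated by $y_3$, $y_1 y_2$, and $y_1^3 + y_2^3$, the task is to show that every element of this basis lies in $A$. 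The decisive observation is that the defining relation $y_3^3 = y_2^3 - y_1^3$, together with $y_1^3 + y_2^3 \in A$ and the assumption that $k$ has characteristic different from two, yields $y_1^3 = \tfrac{1}{2}\bigl((y_1^3 + y_2^3) - y_3^3\bigr) \in A$ and $y_2^3 = \tfrac{1}{2}\bigl((y_1^3 + y_2^3) + y_3^3\bigr) \in A$ individually; hence $y_1^{3a} y_2^{3b} \in A$ for all $a, b \geq 0$, and multiplying by $(y_1 y_2)^{l_1}$ and $y_3^{l_3}$ (both in $A$) produces the required basis elements.

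The main obstacle is precisely this separation of $y_1^3$ and $y_2^3$: working with $y_1^3 + y_2^3$ and $(y_1 y_2)^3 = y_1^3 y_2^3$ alone produces the ring of symmetric polynomials in $y_1^3, y_2^3$, whose homogeneous component of degree $l$ has dimension only $\lfloor l/2 \rfloor + 1$ instead of the required $l+1$. Using the defining relation of $S(3,3,3)$ to import the asymmetric expression $y_1^3 - y_2^3 = -y_3^3$ into $A$ is what makes the dimensions match, and this is the sole place where the characteristic hypothesis is genuinely used. Once surjectivity is established, Lemma~\ref{lem:phi} promotes $\bar{\phi}$ to an isomorphism of ${\rm Im}\,\pi$-graded algebras, and Proposition~\ref{prop:equiv} then delivers the asserted equivalence between $({\rm coh}\mbox{-}\mathbb{X}(6,3,2))^{\mathbb{Z}(3\vec{\omega})}$ and ${\rm coh}\mbox{-}\mathbb{X}(3,3,3)$.
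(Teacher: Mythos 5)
Your proposal is correct and follows essentially the same route as the paper: reduce to showing each homogeneous component $S(3,3,3)_{\vec{y}}$ with $\vec{y}\in{\rm Im}\,\pi$ lies in the subalgebra generated by $y_3$, $y_1y_2$ and $y_1^3+y_2^3$, exhibit the monomial basis $\{y_1^{3a}y_2^{3b}(y_1y_2)^{l_1}y_3^{l_3}\}$, and observe that $y_3^3=y_2^3-y_1^3$ together with $y_1^3+y_2^3$ spans $y_1^3$ and $y_2^3$ (using ${\rm char}\,k\neq 2$), after which Lemma~\ref{lem:phi} and Proposition~\ref{prop:equiv} finish the argument. Your explicit identification of where the characteristic hypothesis enters is a useful elaboration of the paper's one-line remark, but the proof is the same.
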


\begin{proof}
This is similar to the proof of Proposition \ref{prop:A}. It suffices to claim that the homogenous component  $S(3,3,3)_{\vec{y}}$ is generated by $y_3$, $y_1y_2$ and $y_1^3+y_2^3$, whenever $\vec{y}$ lies in ${\rm Im}\pi$. Such an element $\vec{y}$ has its normal form $\vec{y}=l\vec{c}+l_1(\vec{y}_1+\vec{y}_2)+l_3\vec{y}_3$. It follows that $\{y_1^{3a}y_2^{3b}(y_1y_2)^{l_1}y_3^{l_3}\; |\; a+b=l, a, b\geq 0\}$ is a basis of $S(3,3,3)_{\vec{y}}$. Then the claim follows immediately, once we observe that $y_3^3=y_2^3-y_1^3$ and $y_1^3+y_2^3$ linearly span $y_1^3$ and $y_2^3$.
\end{proof}

\subsection{} In this subsection, we give another example for Proposition \ref{prop:equiv}, which relates weighted projective lines of the same weight sequence $(2,2,2,2)$ but with different parameters. Let $\lambda\in k$ be different from $0, 1$. We consider the weighted projective line $\mathbb{X}(2,2,2,2;\lambda)$.

There is a well-defined group homomorphism $\pi\colon L(2,2,2,2)\rightarrow L(2,2,2,2)$ given by $\pi(\vec{x}_1)=\vec{x}_1+\vec{x}_3$, $\pi(\vec{x}_2)=\vec{x}_2+\vec{x}_4$ and $\pi(\vec{x}_3)=\pi(\vec{x}_4)=\vec{c}$.

\begin{lem}\label{lem:D}
The above group homomorphism $\pi\colon L(2,2,2,2)\rightarrow L(2,2,2,2)$ is admissible with ${\rm Ker}\pi=\{0, \vec{x}_3+\vec{x}_4-\vec{c}\}=\mathbb{Z}(\vec{x}_3-\vec{x}_4)$.
\end{lem}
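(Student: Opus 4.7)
The plan is to follow the template established in the proof of Lemma~\ref{lem:A}: first confirm well-definedness of $\pi$, then verify that $\mathrm{Im}\,\pi$ is effective, then describe $\mathrm{Im}\,\pi$ via normal forms, and finally compute the fibers of $\pi$ explicitly, which will simultaneously yield the kernel and establish (AH2) by reusing identities~(\ref{equ:1}) and~(\ref{equ:2}).

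First I would verify that $\pi$ respects the defining relations of $L(2,2,2,2)$: one checks $\pi(2\vec{x}_i)=2\vec{c}$ for $i=1,2$ (using $2\vec{x}_j=\vec{c}$ for $j=3,4$) and $\pi(2\vec{x}_i)=2\vec{c}$ for $i=3,4$, so $\pi$ is well-defined and $\pi(\vec{c})=2\vec{c}$. Next, effectiveness of $\mathrm{Im}\,\pi$ follows since $\pi_1(\pi(\vec{x}_1))=\bar{1}$, $\pi_2(\pi(\vec{x}_2))=\bar{1}$, $\pi_3(\pi(\vec{x}_1))=\bar{1}$ and $\pi_4(\pi(\vec{x}_2))=\bar{1}$.

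To describe $\mathrm{Im}\,\pi$, I would compute the image of a general element $\vec{y}=r\vec{c}+r_1\vec{x}_1+r_2\vec{x}_2+r_3\vec{x}_3+r_4\vec{x}_4$ and read off
\[
\pi(\vec{y})=(2r+r_3+r_4)\vec{c}+r_1\vec{x}_1+r_2\vec{x}_2+r_1\vec{x}_3+r_2\vec{x}_4.
\]
Comparing with normal form, this shows that $\vec{x}=l\vec{c}+l_1\vec{x}_1+l_2\vec{x}_2+l_3\vec{x}_3+l_4\vec{x}_4$ lies in $\mathrm{Im}\,\pi$ precisely when $l_1=l_3$ and $l_2=l_4$; equivalently, its normal form reads $\vec{x}=l\vec{c}+l_1(\vec{x}_1+\vec{x}_3)+l_2(\vec{x}_2+\vec{x}_4)$ with $l_1,l_2\in\{0,1\}$.

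For the fibers, fixing such an $\vec{x}\in\mathrm{Im}\,\pi$ the equation $\pi(\vec{y})=\vec{x}$ forces $r_1=l_1$, $r_2=l_2$ (in normal form, since $r_1,r_2\in\{0,1\}$) and $2r+r_3+r_4=l$ with $r_3,r_4\in\{0,1\}$. When $l$ is even, the only solutions are $(r_3,r_4)\in\{(0,0),(1,1)\}$, giving
\[
\pi^{-1}(\vec{x})=\bigl\{\tfrac{l}{2}\vec{c}+l_1\vec{x}_1+l_2\vec{x}_2,\ \tfrac{l-2}{2}\vec{c}+l_1\vec{x}_1+l_2\vec{x}_2+\vec{x}_3+\vec{x}_4\bigr\},
\]
and setting $\vec{x}=0$ one reads off $\mathrm{Ker}\,\pi=\{0,\vec{x}_3+\vec{x}_4-\vec{c}\}$, which coincides with $\mathbb{Z}(\vec{x}_3-\vec{x}_4)$ since $\vec{x}_3+\vec{x}_4-\vec{c}=\vec{x}_4-\vec{x}_3$ using $\vec{c}=2\vec{x}_3$. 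When $l$ is odd, $(r_3,r_4)\in\{(1,0),(0,1)\}$, so
\[
\pi^{-1}(\vec{x})=\bigl\{\tfrac{l-1}{2}\vec{c}+l_1\vec{x}_1+l_2\vec{x}_2+\vec{x}_3,\ \tfrac{l-1}{2}\vec{c}+l_1\vec{x}_1+l_2\vec{x}_2+\vec{x}_4\bigr\}.
\]
In each case, the identity in (AH2) follows at once from the definition of $\mathrm{mult}$ together with~(\ref{equ:1}) for $l$ even and~(\ref{equ:2}) for $l$ odd, exactly as in Lemma~\ref{lem:A}. There is no real obstacle here: the proof is entirely parallel to Lemma~\ref{lem:A}, with the only mild bookkeeping point being the identification $\vec{x}_3+\vec{x}_4-\vec{c}=\vec{x}_3-\vec{x}_4$ (modulo the $2$-torsion relation) needed to match the two descriptions of the kernel.
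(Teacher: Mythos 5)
Your proof is correct and follows essentially the same route as the paper's: describe elements of ${\rm Im}\,\pi$ in normal form, compute the fibers explicitly according to the parity of $l$ (which yields the kernel), and invoke (\ref{equ:1}) and (\ref{equ:2}) for (AH2). You simply spell out a few steps the paper leaves implicit (well-definedness, effectiveness, and the characterization of ${\rm Im}\,\pi$), all of which check out.
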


Here, $\mathbb{Z}(\vec{x}_3-\vec{x}_4)$ denotes the cyclic subgroup generated by $\vec{x}_3-\vec{x}_4=\vec{x}_3+\vec{x}_4-\vec{c}$, which has order two.

\begin{proof}
The argument is similar to the proof of Lemma \ref{lem:A}. Any element $\vec{x}$ in ${\rm Im}\pi$ has its normal form $\vec{x}=l\vec{c}+l_1(\vec{x}_1+\vec{x}_3)+l_2(\vec{x}_2+\vec{x}_4)$ with $l_1, l_2\in \{0, 1\}$. We now describe the inverse image $\pi^{-1}(\vec{x})$.

If $l$ is even, we have $\pi^{-1}(\vec{x})=\{\frac{l}{2}\vec{c}+l_1\vec{x}_1+l_2\vec{x}_2, \frac{l-2}{2}\vec{c}+l_1\vec{x}_1+l_2\vec{x}_2+\vec{x}_3+\vec{x}_4\}$. This also proves the statement on the kernel of $\pi$. If $l$ is odd, we have $\pi^{-1}(\vec{x})=\{\frac{l-1}{2}\vec{c}+l_1\vec{x}_1+l_2\vec{x}_2+\vec{x}_3, \frac{l-1}{2}\vec{c}+l_1\vec{x}_1+l_2\vec{x}_2+\vec{x}_4\}$. In both cases, the identity in (AH2) follows immediately from the definition of the map ``${\rm mult}$". Indeed, if $l$ is even, we use (\ref{equ:1}); otherwise, we use (\ref{equ:2}).
\end{proof}

We assume that $\sqrt{1-\lambda}$ exists in $k$, and that the characteristic of $k$ is not two. We fix the choice  of $\sqrt{1-\lambda}$. Set $\xi_{\pm}=(2-\lambda)\pm 2\sqrt{1-\lambda}$. Let $\lambda'=\frac{\xi_{-}}{\xi_{+}}$, which equals $\frac{\lambda^2-8\lambda+8+4(\lambda-2)\sqrt{1-\lambda}}{\lambda^2}$. For example, if $k=\mathbb{C}$ and $\lambda=-1$, we infer that $\lambda'=17-12\sqrt{2}$.

We compute in the homogeneous coordinate algebra $S(2,2,2,2;\lambda)$ that $$(x_2x_4)^2-\xi_{\pm}(x_1x_3)^2=(x_2^2-(1\pm\sqrt{1-\lambda})x_1^2)^2.$$
We assume that $\sqrt{\xi_+}$ exists in $k$. We infer that there is a well-defined algebra homomorphism $\phi\colon S(2,2,2,2;\lambda)\rightarrow S(2,2,2,2;\lambda')$ given by $\phi(x_1)=\sqrt{\xi_+}x_1x_3$, $\phi(x_2)=x_2x_4$, $\phi(x_3)=x_2^2-(1+\sqrt{1-\lambda})x_1^2$ and $\phi(x_4)=x_2^2-(1-\sqrt{1-\lambda})x_1^2$.

\begin{prop}\label{prop:D}
Keep the notation and assumptions as above. In particular, we have $\lambda'=\frac{\xi_{-}}{\xi_{+}}$. Then the above defined homomorphisms $\pi \colon L(2,2,2,2)\rightarrow L(2,2,2,2)$ and $\phi\colon S(2,2,2,2;\lambda)\rightarrow S(2,2,2,2;\lambda')$ satisfy the conditions in Proposition {\rm \ref{prop:equiv}}.  Consequently, we have an isomorphism of ${\rm Im}\pi$-graded algebras
$$\bar{\phi}\colon \pi_*S(2,2,2,2;\lambda)\stackrel{\sim}\longrightarrow S(2,2,2,2;\lambda')_{{\rm Im}\pi}$$
and an equivalences of abelian categories
$$({\rm coh}\mbox{-}\mathbb{X}(2,2,2,2;\lambda))^{\mathbb{Z}(\vec{x}_3-\vec{x}_4)}\stackrel{\sim}\longrightarrow {\rm coh}\mbox{-}\mathbb{X}(2,2,2,2;\lambda').$$
\end{prop}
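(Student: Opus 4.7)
The plan is to follow the template established in Propositions~\ref{prop:A}, \ref{prop:B}, and \ref{prop:C}. Admissibility of $\pi$ is already handled by Lemma~\ref{lem:D}, so what remains is to verify the two hypotheses of Proposition~\ref{prop:equiv}: that $\phi$ is a well-defined algebra map satisfying $\phi(S(2,2,2,2;\lambda)_{\vec{y}})\subseteq S(2,2,2,2;\lambda')_{\pi(\vec{y})}$, and that the induced homomorphism $\bar{\phi}\colon \pi_*S(2,2,2,2;\lambda)\rightarrow S(2,2,2,2;\lambda')_{{\rm Im}\pi}$ is surjective. Once these are in place, Proposition~\ref{prop:equiv} will yield both the algebra isomorphism and the claimed equivalence of categories.

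First I would handle the grading: an immediate check on generators gives $\deg\phi(x_1)=\vec{x}_1+\vec{x}_3=\pi(\vec{x}_1)$, $\deg\phi(x_2)=\vec{x}_2+\vec{x}_4=\pi(\vec{x}_2)$, and $\deg\phi(x_3)=\deg\phi(x_4)=\vec{c}=\pi(\vec{x}_3)=\pi(\vec{x}_4)$. For the algebra-homomorphism property, I would feed the identity $(x_2x_4)^2-\xi_\pm(x_1x_3)^2=(x_2^2-(1\pm\sqrt{1-\lambda})x_1^2)^2$ recalled in the text, together with the relation $\lambda'\xi_+=\xi_-$ (which follows from $\lambda'=\xi_-/\xi_+$), into the two defining relations $x_3^2-(x_2^2-x_1^2)$ and $x_4^2-(x_2^2-\lambda x_1^2)$ of the source and verify that both images vanish in the target algebra.

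Next, for the surjectivity of $\bar{\phi}$, take $\vec{x}\in {\rm Im}\pi$ in its normal form $\vec{x}=l\vec{c}+l_1(\vec{x}_1+\vec{x}_3)+l_2(\vec{x}_2+\vec{x}_4)$ with $l_1,l_2\in\{0,1\}$ and $l\geq 0$, as described in the proof of Lemma~\ref{lem:D}. The homogeneous component $S(2,2,2,2;\lambda')_{\vec{x}}$ has basis $\{x_1^{2a}x_2^{2b}(x_1x_3)^{l_1}(x_2x_4)^{l_2}\mid a+b=l,\; a,b\geq 0\}$. The factors $(x_1x_3)^{l_1}$ and $(x_2x_4)^{l_2}$ are, up to nonzero scalar, $\phi(x_1)^{l_1}$ and $\phi(x_2)^{l_2}$, so the claim reduces to exhibiting every monomial $x_1^{2a}x_2^{2b}$ as a polynomial in $\phi(x_3)=x_2^2-(1+\sqrt{1-\lambda})x_1^2$ and $\phi(x_4)=x_2^2-(1-\sqrt{1-\lambda})x_1^2$. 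The coefficient matrix expressing $\phi(x_3),\phi(x_4)$ in terms of $x_2^2$ and $x_1^2$ has determinant $2\sqrt{1-\lambda}$, which is nonzero because $\lambda\neq 1$; hence $\phi(x_3)$ and $\phi(x_4)$ linearly span $x_1^2$ and $x_2^2$, so every required monomial lies in ${\rm Im}\;\phi$. Proposition~\ref{prop:equiv} then finishes the argument.

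The main friction point will be the well-definedness check for $\phi$: the scalars $\xi_\pm$, $\lambda$ and $\lambda'$ must be propagated carefully through the two defining relations, and the crucial identity $\lambda'\xi_+=\xi_-$ has to be invoked at the right moment to match the coefficient of $x_1^4$ arising from $\phi(x_4)^2$. Given the pattern of Propositions~\ref{prop:A}--\ref{prop:C}, the surjectivity half of the argument and the final invocation of Proposition~\ref{prop:equiv} should then be routine.
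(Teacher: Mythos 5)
Your overall route is the same as the paper's: admissibility from Lemma \ref{lem:D}, a check that $\phi$ respects the gradings, surjectivity of $\bar{\phi}$ degree by degree, and then Proposition \ref{prop:equiv}. The surjectivity half is correct and coincides with the paper's proof: the monomial basis $\{x_1^{2a}x_2^{2b}(x_1x_3)^{l_1}(x_2x_4)^{l_2}\mid a+b=l,\ a,b\geq 0\}$ together with the observation that the two quadrics span $x_1^2$ and $x_2^2$ (your determinant $2\sqrt{1-\lambda}\neq 0$) is exactly the argument given there.

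The genuine problem is the step you single out as the main friction point. The identity $(x_2x_4)^2-\xi_{\pm}(x_1x_3)^2=(x_2^2-(1\pm\sqrt{1-\lambda})x_1^2)^2$ is a computation \emph{inside} $S(2,2,2,2;\lambda)$, since it uses $x_4^2=x_2^2-\lambda x_1^2$; it is false in $S(2,2,2,2;\lambda')$, where $x_4^2=x_2^2-\lambda' x_1^2$ and the analogous identity involves $\xi'_{\pm}$ built from $\lambda'$. So it cannot be ``fed into the target algebra'' $S(2,2,2,2;\lambda')$ as you propose. If you carry out the verification literally with the orientation $S(2,2,2,2;\lambda)\rightarrow S(2,2,2,2;\lambda')$, the image of the relation $X_4^2-(X_2^2-\lambda X_1^2)$ computes in $S(2,2,2,2;\lambda')$ to $(\xi_{-}-\lambda\xi_{+})x_1^4$ plus a multiple of $x_1^2x_2^2$, and $\xi_{-}-\lambda\xi_{+}=(\lambda'-\lambda)\xi_{+}\neq 0$ in general (for $\lambda=-1$ one gets $\lambda'=17-12\sqrt{2}\neq -1$); the image of the first relation likewise has a nonzero $x_1^2x_2^2$-coefficient $\lambda'+\xi_{+}-2(1+\sqrt{1-\lambda})$. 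The identity $\lambda'\xi_{+}=\xi_{-}$ matches the $x_1^4$-coefficients precisely when the source relation is $X_4^2-(X_2^2-\lambda' X_1^2)$ and the computation takes place in $S(2,2,2,2;\lambda)$; that is, the displayed formulas define a homomorphism $S(2,2,2,2;\lambda')\rightarrow S(2,2,2,2;\lambda)$ whose image is $S(2,2,2,2;\lambda)_{{\rm Im}\pi}$, and the resulting equivalence is $({\rm coh}\mbox{-}\mathbb{X}(2,2,2,2;\lambda'))^{\mathbb{Z}(\vec{x}_3-\vec{x}_4)}\simeq {\rm coh}\mbox{-}\mathbb{X}(2,2,2,2;\lambda)$ (equivalently, keep the stated orientation but replace $\sqrt{1-\lambda}$ and $\xi_{\pm}$ by their primed analogues, which amounts to relabelling the parameters). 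This orientation slip is arguably already present in the paper's own preamble, but your plan, executed as written, would refute well-definedness rather than confirm it, so the direction must be fixed before Proposition \ref{prop:equiv} can be invoked.
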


\begin{proof}
This is similar to the proof of Proposition \ref{prop:A}. It suffices to claim that the homogenous component  $S(2,2,2,2; \lambda')_{\vec{x}}$ is generated by $x_1x_3$, $x_2x_4$,  $x_2^2-(1+\sqrt{1-\lambda})x_1^2$ and $x_2^2-(1-\sqrt{1-\lambda})x_1^2$, whenever $\vec{x}$ lies in ${\rm Im}\pi$. Such an element $\vec{x}$ has its normal form $\vec{x}=l\vec{c}+l_1(\vec{x}_1+\vec{x}_3)+l_2(\vec{x}_2+\vec{x}_4)$. It follows that $\{x_1^{2a}x_2^{2b}(x_1x_3)^{l_1}(x_2x_4)^{l_2}\; |\; a+b=l, a, b\geq 0\}$ is a basis of $S(2,2,2,2;\lambda')_{\vec{x}}$. Then the claim follows immediately, once we observe that $x_2^2-(1+\sqrt{1-\lambda})x_1^2$ and $x_2^2-(1-\sqrt{1-\lambda})x_1^2$ linearly span $x_1^2$ and $x_2^2$.
\end{proof}

We conclude the paper with some remarks.

\begin{rem}
(1)  Concerning the examples for Proposition \ref{prop:equiv} we give in this paper,  all the weight sequences are of  tubular type. We do not know whether there are nontrivial examples with non-tubular weight sequences for Proposition \ref{prop:equiv}.

(2) Denote by $C_d$ the cyclic group of order $d$. We apply a general result \cite[Theorem 7.2]{El2014} about finite abelian group actions to the equivalence in  Theorem (1). It follows that there is a $C_2$-action on ${\rm coh}\mbox{-}\mathbb{X}(2,2,2,2;-1)$  such that the corresponding category of equivariant objects is equivalent to ${\rm coh}\mbox{-}\mathbb{X}(4,4,2)$. However, it seems not easy to write this $C_2$-action explicitly. Similar remarks apply to the equivalences in Theorem (2) and (3).

(3) We observe that the equivalences in Theorem (1) and (2) might be applied to the study of $\tau^2$-stable tilting complexes in \cite{Jas}. We recall that the Auslander-Reiten translation $\tau$ is induced from the degree-shift functor by the dualizing element  $\vec{\omega}$. To be more precise, $\tau^2$-stable titling complexes in ${\rm coh}\mbox{-}\mathbb{X}(4,4,2)$ (\emph{resp.} ${\rm coh}\mbox{-}\mathbb{X}(6,3,2))$) correspond to certain tilting complexes in ${\rm coh}\mbox{-}\mathbb{X}(2,2,2,2;-1)$ (\emph{resp.} ${\rm coh}\mbox{-}\mathbb{X}(2,2,2,2;\epsilon)$). Here, we use \cite[Proposition 4.5]{Chen} implicitly. Similar remarks hold for the equivalence in Theorem (3), where we study $\tau^3$-stable tilting complexes in ${\rm coh}\mbox{-}\mathbb{X}(6,3,2)$.
\end{rem}

\vskip 10pt

\noindent {\bf Acknowledgements.} J. Chen thanks Helmut Lenzing for helpful discussion on this subject during the 2014 ICRA conference in Sanya. X.W. Chen thanks Shiquan Ruan for the reference \cite{Jas}. This work is supported by National Natural Science Foundation of China (Nos. 11522113 and 11571286), and the Fundamental Research Funds for the Central Universities (No. 20720150006).

\bibliography{}

\begin{thebibliography}{9999}

\bibitem{At} {\sc M.F. Atiyah}, {\em Vector bundles over an elliptic curve}, Proc. London Math. Soc. {\bf 7} (1957), 414--452.


\bibitem{CCZ} {\sc J. Chen, X.W. Chen, and Z. Zhou}, {\em Monadicitiy theorem and weighted projective lines of tubular type}, Inter. Math. Res. Not. IMRN, doi:10.1093/imrn/rnv106.

\bibitem{Chen} {\sc X.W. Chen}, {\em A note on separable functors and monads with an application to equivariant derived categories}, Abh. Math. Semin. Univ. Hambg. {\bf 85} (2015), 43--52.


\bibitem{De} {\sc P. Deligne}, {\em Action du groupe des tresses sur une cat\'{e}gorie}, Invent. Math. {\bf 128} (1997), 159--175.


\bibitem{DGNO} {\sc V. Drinfeld, S. Gelaki, D. Nikshych, and V. Ostrik}, {\em On braided fusion categories, I}, Sel. Math. New Ser. {\bf 16} (2010), 1--119.




\bibitem{El2014} {\sc A.D. Elagin}, {\em On equivariant triangulated categories}, arXiv:1403.7027.





\bibitem{GL87} {\sc W. Geigle, and H. Lenzing,} {\em A class of weighted projective curves arising in
representation theory of finite dimensional algebras,} in:
Singularities, Representations of Algebras and Vector Bundles,
Lecture Notes in Math. {\bf 1273}, 265--297, Springer, 1987.


\bibitem{GL90} {\sc W. Geigle,  and H. Lenzing,} {\em Perpendicular categories with applications
to representations and sheaves,} J. Algebra {\bf 144} (1991),
273--343.


\bibitem{Jas} {\sc G. Jasso}, {\em $\tau^2$-Stable tilting complexes over weighted projective lines,} Adv. Math. {\bf 273} (2015), 1--31.


\bibitem{Len94} {\sc H. Lenzing}, {\em Wild canonical algebras and rings of automorphic forms}, in: Finite Dimensional Algebras and Related Topics (Ottawa, ON, 1992), NATO Adv. Sci. Inst. Ser. C Math. Phys. Sci. {\bf 424} (1994) 191--212.




\bibitem{Len} {\sc H. Lenzing}, {\em Tubular and elliptic curves}, a unpublished note on joint work with H. Meltzer, Paderborn, 2004.

\bibitem{LM} {\sc H. Lenzing, and H. Meltzer}, {\em Sheaves on weighted projective line of genus zero, and represenrations of a tubular algebra,} in: Representations of Algebras (Ottawa, ON, 1992), CMS Conf. Proc. {\bf 14}, Amer. Math. Soc., Providence, RI, 1993, 313--337.


\bibitem{Po} {\sc A. Polishchuk}, {\em Holomorphic bundles on $2$-dimensional noncommutative toric orbifolds}, in: Noncommutative Geometry and Number Theory, Ed. by C. Consani and M. Marcolli, 341--359, Vieweg, 2006.


\bibitem{RR} {\sc I. Reiten, and C. Riedtmann,} {\em Skew group algebras in the representation theory of artin algebras}, J. Algebra {\bf 92} (1985), 224--282.


\bibitem{Rin} {\sc C.M. Ringel}, Tame Algebras and Integral Quadratic Forms,  Lecture Notes Math. {\bf 1099}, Springer-Verlag, Berlin, 1984.

\end{thebibliography}

\vskip 15pt

 \noindent {\tiny

 \vskip 5pt

\noindent Jianmin Chen\\
School of Mathematical Sciences, \\
Xiamen University, Xiamen, 361005, Fujian, PR China.\\
E-mail: chenjianmin@xmu.edu.cn\\}
\vskip 3pt

{\tiny \noindent    Xiao-Wu Chen \\
 Key Laboratory of Wu Wen-Tsun Mathematics, Chinese Academy of Sciences,\\
School of Mathematical Sciences, University of Science and Technology of China,\\
No. 96 Jinzhai Road, Hefei, 230026, Anhui, P.R. China.\\
E-mail: xwchen@mail.ustc.edu.cn, URL: http://home.ustc.edu.cn/$^\sim$xwchen}

\end{document}